\def\bc{\begin{center}}
\def\ec{\end{center}}
\def\d{\displaystyle}
\newtheorem{thm}{Theorem}[section]
\newtheorem{lem}[thm]{Lemma}
\newtheorem{defn}[thm]{Definition}
\numberwithin{equation}{section}
\def\ll{\mathcal{Z}}
\begin{document}

\title[Naturally graded Zinbiel algebras with nilindex $n-3$]{Naturally graded Zinbiel algebras with nilindex $n-3$}%

\author{J.Q. Adashev, L.M. Camacho, S. G\'{o}mez-Vidal,
 I.A. Karimjanov}
\address{[J.Q. Adashev -- I.A. Karimjanov] Institute of Mathematics and Information Technologies
 of Academy of Uzbekistan, 29, F.Hodjaev srt., 100125, Tashkent (Uzbekistan)}
\email{adashevjq@mail.ru --- iqboli@gmail.com}
\address{[L.M. Camacho] Dpto. Matem\'{a}tica Aplicada I.
Universidad de Sevilla. Avda. Reina Mercedes, s/n. 41012 Sevilla.
(Spain)} \email{lcamacho@us.es --- samuel.gomezvidal@gmail.com}

%

\begin{abstract}
We present the classification of a subclass of $n$-dimensional naturally graded Zinbiel algebras. This subclass has the nilindex $n-3$ and the characteristic sequence $(n-3,2,1).$
In fact, this result completes the classification of naturally graded Zinbiel algebras of nilindex $n-3.$
\end{abstract}
\maketitle

\textbf{Mathematics Subject Classification 2010}: 17A32.

\textbf{Key Words and Phrases}: Zinbiel algebra, Leibniz
algebra, nilpotency, characteristic sequence
\section{Introduction.}

Intensive investigation on Lie algebras leads to the appearance of a new algebraic object -- Leibniz algebras. The Leibniz algebras introduced by Loday in \cite{loday1} are a ''non commutative" algebras analogue to Lie algebras. It should be mentioned that Leibniz algebras inherit an important Lie algebra property: the operator of right multiplication on an element of an algebra is a derivation.

Leibniz algebras form a Koszul operad in the sense of V. Ginzburg and M. Kapranov \cite{Gin}. Under the Koszul duality the operad of Lie algebras is dual the operad of associative and commutative algebras. The notion of dual Leibniz algebra defined by J.-L. Loday \cite{Loday} is precisely the dual operad of Leibniz algebras in this sense.

In this paper, we study algebras which are the dual to Leibniz algebras in Koszul sense. J.-L. Loday studied in \cite{Loday} categorical properties of Leibniz algebras and considered in this connection a new object -- Zinbiel algebras (Leibniz is written in reverse order). Since the category of Zinbiel algebras is Koszul dual to the category of Leibniz algebras, sometimes they are also called dual Leibniz algebras.

In \cite{Jobir3, Dz, Umir} some crucial properties of Zinbiel algebras were obtained. Particularly, in \cite{Dz}, the authors prove that every finite-dimensional Zinbiel algebra over complex numbers is nilpotent. However, the study of nilpotent algebras is too complex and should be carried out with additional conditions, such as conditions on nilindex, various types of gradations, characteristic sequence and others.

The aim of this work is to continue the study of complex
finite-dimensional naturally graded Zinbiel algebras. The n-dimensional Zinbiel algebras of nilindex $k$ with $n-2\leq k\leq n$ are classified in
\cite{TesisJobir, Jobir3}. The
classification of complex $n$-dimensional naturally graded Zinbiel algebras of nilindex $n-3$ is a difficult problem and it should be divided into three cases. Namely, it is necessary to consider the possibilities of the characteristic
sequence of such algebras: $(n-3,3)$, $(n-3, 1, 1, 1)$ and $(n-3,
2, 1).$ The classification of complex naturally graded Zinbiel
algebras of nilindex $n-3$ with characteristic sequence equal to
$(n-3,3)$ and $(n-3, 1, 1, 1)$ has been done in \cite{TesisJobir}.

The knowledge of naturally graded algebras of a certain family offers significant
information about their structural properties.

In this paper we obtain the classification of naturally graded
Zinbiel algebras of nilindex $n-3$ with characteristic sequence
$(n-3,2,1).$ Thus, we complete the study for the $n-3$ case.
All the spaces and the algebras are considered over the field of complex numbers.
We omit the products which are equal to zero for convenience.

Throughout all the work we use the software $Mathematica$ (see \cite{JSC})
to compute the Zinbiel identity in low dimensions and to formulate the generalizations of the calculations,
which are proved for arbitrary dimension.
Moreover, the program allows us to construct new bases using some general transformation of the generators of the algebra.

Since the direct sum of nilpotent Zinbiel algebras is nilpotent, we shall consider only non split algebras.

\section{Preliminaries}
In this section we introduce some definitions, notations and
results, which are necessary for the understanding of graded Zinbiel algebras.

\begin{defn} A vector space $\ll$ over a field $K$ with a bilinear operation ``$\circ$'' is called Zinbiel
algebra if for any $x,y,z \in \ll$ the following identity
\begin{equation}\label{21}
(x\circ y)\circ z=x\circ (y\circ z)+x\circ (z \circ y)
\end{equation}
holds.
\end{defn}

Examples of Zinbiel algebras can be found in \cite{Jobir3,Dz,Loday}.

$Z(a,b,c)$ denotes the following polynomial:
$$Z(a,b,c)=(a\circ b)\circ c-a\circ(b\circ c)-a\circ(c\circ b).$$

Zinbiel algebras are defined by the identity
$Z(a,b,c)=0.$

For a given Zinbiel algebra $\ll$ the sequence of two-sided ideals defined recursively as follow:
$$\ll^1=\ll, \ \ll^{k+1}=\ll \circ \ll^k, \ k\geq 1. $$
is said to be the lower central series.

\begin{defn} A Zinbiel algebra $\ll$ is called nilpotent if there exists $s \in \mathbb{N}$ such that $\ll^s \neq 0$ and $\ll^{s+1}=0.$ The minimal number $s$ satisfying this property is called the index of nilpotency or nilindex of the algebra $\ll.$
\end{defn}

For a given Zinbiel algebra $\ll$ we introduce denotations:\\
$$R(\ll) = \{x \in \ll \ | \  y \circ x = 0 \hbox{ for any } y \in
\ll\}\ -- \ \mbox{ \emph{the right annihilator} of }\ll,$$
$$L(\ll) = \{x \in \ll \ | \ x \circ y = 0 \hbox{ for any } y \in
\ll\}\ --\ \mbox{ \emph{the left annihilator} of }\ll,$$
$$Cent(\ll) = \{x,y \in \ll \ | \ x \circ y= y \circ x = 0 \hbox{
for any } y \in \ll\}\ --\ \mbox{ \emph{the center} of }\ll.$$

It is easy to see that the center and the right annihilator of $\ll$ are two-sided ideals.

Let us denote by $L_{x}$ the operator of left multiplication on
element $x$, i.e. $L_x: \ll \longrightarrow \ll$ such that
$L_x(y)=x\circ y$ for any $y \in \ll.$

Let $\ll$ be a complex $n$-dimensional Zinbiel algebra and $x$ be
an element of the set $\ll \setminus \ll^2$. For the operator
$L_x$ we define a descending sequence $C(x)=(n_1, n_2, \dots,
n_k)$ with $n_1+\cdots+n_k=n$, which consists of the dimensions of the Jordan blocks of
the operator $L_x$. In the set of such sequences we consider the
lexicographic order, that is, $C(x)=(n_1,n_2, \dots, n_k) < C(y)=(m_1, m_2, \dots, m_s)$ if there exists $i$
such that $n_i<m_i$ and $n_j=m_j$ for $j<i$. Taking into account the equality $n_1+\dots+n_k=m_1+\dots+m_s$ such comparison is always applicable.

  \begin{defn}\label{def:char.seq}
  The sequence $C(\ll)=\max \{C(x)\  : \ x \in \ll \setminus \ll^2 \}$ is called the characteristic sequence
  of the algebra $\ll$.
  \end{defn}

In \cite{Dz}, the authors prove that Zinbiel algebras of finite dimension are nilpotent.
Since we focused our attention on finite dimension complex nilpotent Zinbiel
algebras.

Let $\ll$ be a finite-dimensional nilpotent Zinbiel algebra with
nilindex equal to $s$. For $i$ ($1\leq i\leq s)$ we put
$\ll_i=\ll^i/\ll^{i+1}$ and we obtain the graded Zinbiel algebra
$$gr(\ll) = \ll_1 \oplus
\ll_2\oplus \ldots\oplus \ll_{s},\ \mbox{ where } \ll_i\circ \ll_j \subseteq \ll_{i+j}.$$

An algebra $\ll$ if called naturally graded if $\ll\cong gr(\ll).$ It is not difficult to see that
$\ll_{i+1}=\ll_1 \circ \ll_i$ in the naturally graded algebra $\ll.$

Let $\ll$ be a naturally graded Zinbiel algebra with
characteristic sequence $(n-3,2,1).$ By de\-finition of
characteristic sequence there exists a basis $\{e_1, e_2, \dots,
e_n\}$ in the algebra $\ll$ such that the operator $L_{e_1}$ has
one block $J_{n-3}$ of size $(n-3),$ one block $J_2$ of size $2$ and one block $J_1$ of size one.

Note that there will be six possibilities for the operators $L_{e_1}.$
By a change of basis it is easy to prove that the six cases can be reduced to the following three cases:
$$
I.\left(\begin{array}{ccc}
J_{n-3}&0 &0 \\
0&J_2 &0\\
0& 0& J_1
\end{array} \right),
\ \ \ II.\left(\begin{array}{ccc}
J_2&0 &0 \\
0&J_{n-3} &0\\
0& 0& J_1
\end{array} \right),
\ \ \ III.\left(\begin{array}{ccc}
J_1&0 &0 \\
0&J_{n-3} &0\\
0& 0& J_2
\end{array} \right).$$

\begin{defn} A Zinbiel algebra $\ll$ is called either of first type (type I), second type (type II) or third type (type III) if the operator $L_{e_1}$ has the form:
$$
I.\left(\begin{array}{ccc}
J_{n-3}&0 &0 \\
0&J_2 &0\\
0& 0& J_1
\end{array} \right),
\ \ \ II.\left(\begin{array}{ccc}
J_2&0 &0 \\
0&J_{n-3} &0\\
0& 0& J_1
\end{array} \right),
\ \ \ III.\left(\begin{array}{ccc}
J_1&0 &0 \\
0&J_{n-3} &0\\
0& 0& J_2
\end{array} \right)$$
respectively.

\end{defn}

From now on we denote by $C_i^j$ the combinatorial numbers $C_i^j=\left (\begin{array}{l}
i\\
j
\end{array} \right ).$

The following result holds:
\begin{lem}\cite{p-FNGZA}
Let $\ll$ be a Zinbiel algebra such that $e_1\circ e_i=e_{i+1}$ for $1\leq i\leq k-1,$ with respect to the adapted basis $\{e_1,\dots,e_{k},e_{k+1},\dots,e_n\}.$ Then
$$e_i\circ e_j=C_{i+j-1}^j e_{i+j},\quad \mbox{ for }\quad 2\leq i+j\leq k$$
\end{lem}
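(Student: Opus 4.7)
The plan is to prove the identity by induction on $i$ (with $j$ free, subject to $i+j\leq k$), using the Zinbiel identity \eqref{21} to reduce $e_i\circ e_j$ to a product whose first factor is $e_{i-1}$, and then invoking Pascal's rule for binomial coefficients.

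The base case $i=1$ is exactly the hypothesis: for $1\leq j\leq k-1$ we have $e_1\circ e_j=e_{j+1}=C_{j}^{j}\,e_{j+1}$, as required. For the inductive step, fix $i\geq 2$ and $j\geq 1$ with $i+j\leq k$, and assume the formula holds for all products $e_{i'}\circ e_{j'}$ with $i'<i$ (and $i'+j'\leq k$). Writing $e_i=e_1\circ e_{i-1}$ and applying the Zinbiel identity,
\begin{equation*}
e_i\circ e_j=(e_1\circ e_{i-1})\circ e_j=e_1\circ(e_{i-1}\circ e_j)+e_1\circ(e_j\circ e_{i-1}).
\end{equation*}
Both inner products satisfy $(i-1)+j\leq k-1$, so the induction hypothesis applies to them, yielding
\begin{equation*}
e_{i-1}\circ e_j=C_{i+j-2}^{\,j}\,e_{i+j-1},\qquad e_j\circ e_{i-1}=C_{i+j-2}^{\,i-1}\,e_{i+j-1}.
\end{equation*}
Since $(i-1)+j\leq k-1$, the element $e_{i+j-1}$ is in the range covered by the hypothesis $e_1\circ e_{i+j-1}=e_{i+j}$, so
\begin{equation*}
e_i\circ e_j=\bigl(C_{i+j-2}^{\,j}+C_{i+j-2}^{\,i-1}\bigr)\,e_{i+j}.
\end{equation*}

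The final step is the combinatorial identity $C_{i+j-2}^{\,j}+C_{i+j-2}^{\,i-1}=C_{i+j-1}^{\,j}$. This follows from the symmetry $C_{i+j-2}^{\,i-1}=C_{i+j-2}^{\,j-1}$ and Pascal's rule $C_{i+j-2}^{\,j-1}+C_{i+j-2}^{\,j}=C_{i+j-1}^{\,j}$, which closes the induction.

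There is no real obstacle: the whole argument is a single induction powered by the Zinbiel identity, with the only thing to watch being that every intermediate product stays within the range $i+j\leq k$ where the hypothesis $e_1\circ e_m=e_{m+1}$ is available. The mild bookkeeping is verifying this at each application, but since the inductive call lowers the first index by one and the Zinbiel expansion then multiplies by $e_1$ on the left at level $i+j-1\leq k-1$, every such use is legitimate.
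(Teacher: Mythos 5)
The paper does not actually prove this lemma --- it quotes it from \cite{p-FNGZA} --- so your argument has to stand on its own. Its core (expand $e_i\circ e_j=(e_1\circ e_{i-1})\circ e_j$ by the Zinbiel identity and finish with Pascal's rule) is the right idea, and the binomial arithmetic $C_{i+j-2}^{j}+C_{i+j-2}^{i-1}=C_{i+j-2}^{j}+C_{i+j-2}^{j-1}=C_{i+j-1}^{j}$ is correct, as is the check that $i+j-1\le k-1$ keeps every left multiplication by $e_1$ inside the range of the adapted-basis relations. However, the induction as you have set it up does not close. You induct on the first index $i$ and assume the formula for all $e_{i'}\circ e_{j'}$ with $i'<i$; but the Zinbiel expansion produces the term $e_1\circ(e_j\circ e_{i-1})$, whose inner product has \emph{first} index $j$, and nothing forces $j<i$. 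Already for $e_2\circ e_3$ you need the value of $e_3\circ e_1$, which in your ordering is a later case, not an earlier one, so the sentence ``the induction hypothesis applies to them'' is unjustified for the second summand.

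The repair is standard: induct on $s=i+j$ rather than on $i$. The base case $s=2$ is $e_1\circ e_1=e_2=C_1^1e_2$; for $s\ge 3$, the case $i=1$ is the defining relation, and for $i\ge 2$ both inner products $e_{i-1}\circ e_j$ and $e_j\circ e_{i-1}$ have index sum $s-1$ with both indices at least $1$, so the hypothesis at level $s-1$ covers them and your computation goes through verbatim. With that single change of induction variable the proof is complete; everything else in your write-up is sound.
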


\section{Main Result}

\subsection{Type I} Algebras of type I with $n\geq 8.$ So, we have the following brackets:
$$\left\{\begin{array}{ll}
\,e_1\circ e_i = e_{i+1},& 1\leq i\leq n-4,\\
\,e_1\circ e_{n-3} =0,&\\
\,e_1\circ e_{n-2} =e_{n-1},&\\
\,e_1\circ e_{n-1} =0,&\\
\,e_1\circ e_n =0.&\\
 \end{array}\right.$$

It is easy to see that $ \ll_i\supseteq \langle e_i\rangle $ where  $1 \leq i \leq
n-3$. It is evident that $dim(\ll_1)>1.$ In fact, if $dim(\ll_1)=1,$ then the algebra $\ll$ is one-degenerated and therefore it is a zero-filiform algebra, but it is not an algebra of nilindex $n-3.$ Let us assume that $e_{n-2}\in \ll_{r_1}$ and $e_n\in \ll_{r_2}$, then $e_{n-1}\in \ll_{r_1+1}$.

We can distinguish the following cases:

%
%

\noindent \textbf{Case I.} If $r_1=r_2=1$.

Then we have that
$$\ll_1=<e_1,e_{n-2},e_n>,\ \ll_2=<e_2,e_{n-1}>,\ \ll_3=<e_3>,\dots, \ll_{n-3}=<e_{n-3}>$$
and the following products:
$$\begin{array}{lll}
\, e_1\circ e_1 = e_2,& e_1\circ e_{n-2} = e_{n-1}, & e_{n-2}\circ e_1 =\alpha_1 e_2 + \alpha_2 e_{n-1},\\
\, e_{n-2}\circ e_{n-2} =\alpha_3 e_2 + \alpha_4 e_{n-1}, & e_{n-2}\circ e_n =\alpha_5 e_2 + \alpha_6 e_{n-1}, & e_n\circ e_1= \beta_1e_2+ \beta_2 e_{n-1},\\
\, e_n\circ e_{n-2} = \beta_3e_2+ \beta_4 e_{n-1}, & e_n\circ e_n =\beta_5 e_2 + \beta_6 e_{n-1}, & e_1\circ e_2 = e_3,\\
\, e_{n-2}\circ e_2 =\gamma_1 e_3,& e_{n-2}\circ e_{n-1} = \gamma_2e_3, & e_n\circ e_2 =\gamma_3 e_3,\\
\, e_n\circ e_{n-1} =\gamma_4 e_3.\\
\end{array}$$

From the equality
$Z(e_1,e_n,e_1)=Z(e_1,e_n,e_n)=0$
we have $\beta_1=\beta_5=0$.

Let us consider the equalities $Z(e_1,e_{n-2},e_1)=Z(e_1,e_{n-1},e_1)=0$
then it follows $\alpha_1=0$.

From the equalities
$$\begin{array}{l}
Z(e_1,e_1,e_{n-2})=Z(e_{n-2},e_1,e_1)=Z(e_{n-2},e_{n-1},e_1) =Z(e_1,e_1,e_n)=0\\
Z(e_1,e_n,e_2)=Z(e_n,e_{n-1},e_1)=Z(e_1,e_{n-2},e_{n-2})=Z(e_1,e_{n-1},e_{n-2})=0\\
Z(e_1,e_{n-2},e_n)=Z(e_1,e_n,e_{n-1})=Z(e_1,e_1,e_{n-1})=Z(e_1,e_{n-2},e_2)=0\\
Z(e_{n-2},e_n,e_1)= Z(e_{n-2},e_{n-2},e_1)=0\\
\end{array}$$
we obtain
$$\gamma_1=\gamma_2=\gamma_3=\gamma_4=\alpha_3=\alpha_5=\beta_3=0,$$
and
$$e_2\circ e_{n-2}=e_{n-2}\circ e_2=e_2\circ e_{n-1}=e_{n-1}\circ e_2=e_2\circ e_n=e_n\circ e_2=0.$$

Now, by mathematical induction method, we prove that $e_{n-1}\circ e_k=0$ and $e_k\circ e_{n-1}=0$ with $2\leq k\leq n-3.$
\begin{itemize}
\item If $k=2,$ then we have $e_{n-1}\circ e_2 = e_2\circ e_{n-1}=0$.

\item Let us suppose that for some $k$ the equalities $e_{n-1} \circ e_k = 0$ and $e_k\circ e_{n-1}=0$ are true.
We prove it for $k+1$.
$$\begin{array}{ll}
e_{n-1}\circ e_{k+1}&=e_{n-1}\circ (e_1\circ e_k)=(e_{n-1}\circ
e_1)\circ e_k - e_{n-1}\circ (e_k\circ e_1)=\\
&=-C_k^1 e_{n-1}\circ e_{k+1}= -ke_{n-1}\circ e_{k+1},\quad e_{n-1}\circ e_{k+1}=0.\\[2mm]
e_{k+1}\circ e_{n-1}&=(e_1\circ e_k)\circ e_{n-1}=e_1\circ (e_{k}\circ
e_{n-1})+e_1\circ (e_{n-1}\circ e_k)=\\
&=0
\end{array}$$

\end{itemize}

\

As in previous cases, it easy to see that
$e_{k}\circ e_{n-2}=e_{n-2}\circ e_k=0$ and $e_{k}\circ e_{n}=e_{n}\circ e_k=0$ for $2\leq k\leq n-3.$

Thus, we have obtained the following family of algebras:
$$Z(a_1,a_2,a_3,a_4,a_5,a_6):\left\{\begin{array}{ll}
\,e_i\circ e_j= C_{i+j-1}^j e_{i+j},& 2\leq i+j\leq n-3, \\
e_1\circ e_{n-2}=e_{n-1},& \\
e_{n-2}\circ e_1=a_1e_{n-1},&\\
\,e_{n-2}\circ e_{n-2}=a_2e_{n-1},&\\
 e_{n-2}\circ e_n=a_3e_{n-1}, &\\
  e_n\circ e_1=a_4e_{n-1},&\\
\, e_n\circ e_{n-2}=a_5e_{n-1},& \\
e_n\circ e_n=a_6e_{n-1},&
\end{array}\right.$$
where we omit the products that are equal to zero.

\begin{thm}\label{teorema1}
An arbitrary Zinbiel algebra of the family $Z(a_1,a_2,a_3,a_4,a_5,a_6)$ is isomorphic to one of the following pairwise non-isomorphic algebras:
$$\begin{array}{lll}
Z_1(1,0,0,0,1,0), &Z_2(0,0,0,0,1,0),&Z_3(0,1,0,1,0,0),\\
Z_4(0,0,0,1,0,0),&Z_5(0,1,0,0,0,0),&Z_6(1,1,0,0,0,0),\\
 Z_7(\lambda,0,0,0,0,0),\ \lambda\in \mathbb{C},&Z_8(0,\lambda,1,0,0,1),\ \lambda \in \mathbb{C}\setminus\{0\},&Z_9( \alpha,-\frac{\alpha}{(\alpha-1)^2},1,0,0,1),\  \alpha \in \mathbb{C}\setminus\{0,1\},\\
 Z_{10}(0,0,1,0,1,1),&  Z_{11}(1,0,1,0,1,1),&Z_{12}(0,0,1,1,0,0),\\
 Z_{13}(0,0,1,0,0,0),&Z_{14}(\lambda,1,1,0,1,1),\ \lambda\in \mathbb{C},& Z_{15}(0,1,1,-1,1,1),\\
Z_{16}(1,1,1,0,1,1).& & \\
\end{array}$$
\end{thm}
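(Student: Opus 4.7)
The plan is to classify the family $Z(a_1,\ldots,a_6)$ by describing explicitly the group of adapted basis changes that preserve the natural grading and the Jordan type $(n-3,2,1)$ of $L_{e_1}$, computing its induced action on the parameter vector, and then reading off the orbit representatives.

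First, I would parametrize the admissible changes of basis. Since $\ll_1=\langle e_1,e_{n-2},e_n\rangle$, the new generators of $\ll_1$ must take the form
\begin{align*}
e_1' &= A_1 e_1 + A_2 e_{n-2} + A_3 e_n,\\
e_{n-2}' &= B_1 e_1 + B_2 e_{n-2} + B_3 e_n,\\
e_n' &= C_1 e_1 + C_2 e_{n-2} + C_3 e_n,
\end{align*}
with $e_{i+1}':=e_1'\circ e_i'$ for $1\leq i\leq n-4$ and $e_{n-1}':=e_1'\circ e_{n-2}'$. The requirement that $e_n'$ still spans the $J_1$ block of $L_{e_1'}$ forces the single equation $e_1'\circ e_n'=0$, which after expansion becomes a linear relation on $(C_1,C_2,C_3)$ whose coefficients depend on $A_i,B_i$ and the $a_j$. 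Invertibility of the $3\times 3$ coefficient matrix and the nondegeneracy $e_1'\circ e_{n-2}'\neq 0$ are the remaining restrictions.

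Second, by the Lemma cited at the end of the Preliminaries, the ``chain'' part $e_i'\circ e_j'=C_{i+j-1}^j e_{i+j}'$ for $2\leq i+j\leq n-3$ is automatic once the $e_i'$ are defined via iterated left multiplication by $e_1'$. Thus only the seven products
\[
e_1'\circ e_{n-2}',\ e_{n-2}'\circ e_1',\ e_{n-2}'\circ e_{n-2}',\ e_{n-2}'\circ e_n',\ e_n'\circ e_1',\ e_n'\circ e_{n-2}',\ e_n'\circ e_n'
\]
need to be expanded in the old basis (together with the constraint $e_1'\circ e_n'=0$) and matched to the required form. This yields explicit polynomial/rational formulas $a_k'=F_k(A,B,C;a_1,\ldots,a_6)$ for $k=1,\ldots,6$ constituting the action to be analyzed. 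Up to an overall rescaling of $e_{n-1}'$, the dependence on $(A_i,B_i,C_i)$ splits naturally into a scaling part (rescaling each of $e_1',e_{n-2}',e_n'$) and a ``shear'' part (mixing between the three $\ll_1$-directions).

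Third, I would run a case analysis on $(a_1,\ldots,a_6)$, organized by the vanishing pattern of the linear parameters $a_1,a_4$ and by the rank/Jordan structure of the $2\times 2$ block of quadratic coefficients $a_2,a_3,a_5,a_6$ under independent rescalings of $e_{n-2}'$ and $e_n'$. Generic combinations reduce to one of the discrete canonical forms $Z_1,\ldots,Z_6,Z_{10},Z_{11},Z_{12},Z_{13},Z_{15},Z_{16}$, while the one-parameter families $Z_7,Z_8,Z_9,Z_{14}$ arise precisely when the stabilizer of a canonical form is large enough to leave one coefficient (the moduli parameter $\lambda$ or $\alpha$) invariant.

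The principal obstacle will be the bookkeeping: writing out the full transformation formulas is straightforward but heavy, and one must both verify that every parameter vector lies on one of the listed orbits and prove pairwise non-isomorphism. The latter is subtlest for the one-parameter families, where one must extract an intrinsic invariant --- typically a cross-ratio-type quantity built from the $a_j$ and from the stabilizer constraint $e_1'\circ e_n'=0$ --- that distinguishes different values of $\lambda$ or $\alpha$; the conspicuous rational expression $-\alpha/(\alpha-1)^2$ entering the description of $Z_9$ is presumably precisely such an invariant.
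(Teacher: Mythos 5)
Your overall strategy coincides with the paper's: because of the natural gradation it suffices to make a general change of the three generators $e_1',e_{n-2}',e_n'$ of $\ll_1$ (the remaining basis vectors being regenerated by left multiplication by $e_1'$), compute the induced rational action on $(a_1,\ldots,a_6)$, and classify orbits. One inaccuracy in your setup: preserving the Jordan type of $L_{e_1'}$ does not reduce to ``the single equation $e_1'\circ e_n'=0$.'' You must also impose $B_1=C_1=0$ (no $e_1$-component in $e_{n-2}'$ or $e_n'$), since otherwise $e_1'\circ(e_1'\circ e_{n-2}')\neq 0$ and the $J_2$ block is destroyed; the paper records precisely these restrictions ($Q_1=R_1=0$) alongside the linear relation coming from $e_1'\circ e_n'=0$ and the nondegeneracy conditions.

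More seriously, the proposal stops where the actual proof begins: the entire content of the theorem is the case analysis, and you have not produced the invariants that drive it. The paper's first dichotomy is not the vanishing pattern of $a_1,a_4$ but whether $e_n\in R(Z)$, i.e.\ $(a_3,a_6)=(0,0)$; within each branch the orbits are separated by the (non)vanishing of specific semi-invariant quantities such as $a_5$, $a_2a_4-a_1a_5$, $a_3a_5-a_2a_6$, $a_3-a_5$, $a_3^2-a_3a_5+a_5^2-a_2a_6$, and a quartic polynomial $\Delta$ in the $a_i$ whose vanishing carves out the one-parameter family $Z_9$ (your conjectured ``cross-ratio-type quantity'' is, concretely, the relation $\alpha+\beta(\alpha-1)^2=0$ that survives normalization on the hypersurface $\Delta=0$). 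Without exhibiting these invariants, showing that each level set collapses to exactly one of the sixteen normal forms, and checking that distinct values of $\lambda$ and $\alpha$ give non-isomorphic algebras, the argument is a plan rather than a proof; and there is no a priori reason that your proposed organization by the rank of the quadratic block in $(a_2,a_3,a_5,a_6)$ would reproduce the same sixteen classes without essentially redoing the computation the paper carries out.
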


\begin{proof} Let $\ll$ be satisfying to the hypothesis of the theorem. Due to the property of natural gradation of the algebra it is enough to consider the following change of generators:
$$\begin{array}{rl}
\,e_1'&=P_1e_1+P_{n-2}e_{n-2}+P_ne_n,\\
\,e_{n-2}'&=Q_1e_1+Q_{n-2}e_{n-2}+Q_ne_n,\\
\,e_n'&=R_1e_1+R_{n-2}e_{n-2}+R_ne_n.\\
\end{array}$$

Making the general change of basis in the family $Z(a_1,a_2,a_3,a_4,a_5,a_6),$ we derive the expressions of the new parameters in the new basis $(1)$:
$$\begin{array}{l}
a_1'=\displaystyle\frac{a_1P_1Q_{n-2}+a_2P_{n-2}Q_{n-2}+a_3P_nQ_{n-2}+a_4P_1Q_n+a_5P_{n-2}Q_n+a_6P_nQ_n}{P_1Q_{n-2}+a_2P_{n-2}Q_{n-2}+a_3P_{n-2}Q_n+a_5P_nQ_{n-2}+a_6P_nQ_n},\\[4mm]
a_2'=\displaystyle\frac{a_2Q_{n-2}^2+a_3Q_{n-2}Q_n+a_5Q_{n-2}Q_n+a_6Q_n^2}{P_1Q_{n-2}+a_2P_{n-2}Q_{n-2}+a_3P_{n-2}Q_n+a_5P_nQ_{n-2}+a_6P_nQ_n},\\[4mm]
a_3'=\displaystyle\frac{a_2Q_{n-2}R_{n-2}+a_3Q_{n-2}R_n+a_5Q_nR_{n-2}+a_6Q_nR_n}{P_1Q_{n-2}+a_2P_{n-2}Q_{n-2}+a_3P_{n-2}Q_n+a_5P_nQ_{n-2}+a_6P_nQ_n},\\[4mm]
a_4'=\displaystyle\frac{a_1P_1R_{n-2}+a_2P_{n-2}R_{n-2}+a_3P_nR_{n-2}+a_4P_1R_n+a_5P_{n-2}R_n+a_6P_nR_n}{P_1Q_{n-2}+a_2P_{n-2}Q_{n-2}+a_3P_{n-2}Q_n+a_5P_nQ_{n-2}+a_6P_nQ_n},\\[4mm]
a_5'=\displaystyle\frac{a_2Q_{n-2}R_{n-2}+a_3Q_nR_{n-2}+a_5Q_{n-2}R_n+a_6Q_nR_n}{P_1Q_{n-2}+a_2P_{n-2}Q_{n-2}+a_3P_{n-2}Q_n+a_5P_nQ_{n-2}+a_6P_nQ_n},\\[4mm]
a_6'=\displaystyle\frac{a_2R_{n-2}^2+a_3R_{n-2}R_n+a_5R_{n-2}R_n+a_6R_n^2}{P_1Q_{n-2}+a_2P_{n-2}Q_{n-2}+a_3P_{n-2}Q_n+a_5P_nQ_{n-2}+a_6P_nQ_n},
\end{array}$$
and the following restrictions:
$$(2)\left\{\begin{array}{l}
Q_1=R_1=0,\\[1mm]
P_1R_{n-2}+a_2P_{n-2}R_{n-2}+a_3P_{n-2}R_n+a_5P_nR_{n-2}+a_6P_nR_n=0, \\[1mm]
P_1Q_{n-2}+a_2P_{n-2}Q_{n-2}+a_3P_{n-2}Q_n+a_5P_nQ_{n-2}+a_6P_nQ_n\neq 0,\\[1mm]
P_1(Q_{n-2}R_n-Q_nR_{n-2})\neq 0.
\end{array}\right.$$

We can distinguish two cases:

\

\noindent \textbf{Case 1.} Let $e_n\in R(Z)$ be, then $a_3=a_6=0.$

From the restrictions,
$$\left.\begin{array}{l}
(P_1+a_2P_{n-2}+a_5P_n)R_{n-2}=0,\\
(P_1+a_2P_{n-2}+a_5P_n)Q_{n-2}\neq0,\\
P_1(Q_{n-2}R_n-Q_nR_{n-2})\neq0.
\end{array}\right\} \Rightarrow R_{n-2}=0.$$
it follows that $P_1Q_{n-2}R_n\neq0.$
Thus, the new parameters are:
$$\begin{array}{l}
a_1'=\displaystyle\frac{a_1P_1Q_{n-2}+a_2P_{n-2}Q_{n-2}+a_4P_1Q_n+a_5P_{n-2}Q_n}{Q_{n-2}(P_1+a_2P_{n-2}+a_5P_n)},\\[4mm]
a_2'=\displaystyle\frac{a_2Q_{n-2}+a_5Q_n}{P_1+a_2P_{n-2}+a_5P_n},\\[4mm]
a_4'=\displaystyle\frac{R_n(a_4P_1+a_5P_{n-2})}{Q_{n-2}(P_1+a_2P_{n-2}+a_5P_n)},\\[4mm]
a_5'=\displaystyle\frac{a_5R_n}{P_1+a_2P_{n-2}+a_5P_n},
\end{array}$$

We observe that the nullity of $a_5$ is invariant. Moreover,
it is easy to check that the nullity of the following expression
$$a_2'a_4'-a_1'a_5'=\frac{(a_2a_4-a_1a_5)P_1R_n}{(P_1+a_2P_{n-2}+a_5P_n)^2}$$
is invariant. Thus, we can distinguish the following non-isomorphic cases:

\

\noindent \fbox{\textbf{Case 1.1.}} Let $a_5\neq0$ be. Then choosing
$$R_n=\frac{P_1+a_2P_{n-2}+a_5P_n}{a_5}, \quad P_{n-2}=-\frac{a_4P_1}{a_5}, \quad Q_n=-\frac{a_2Q_{n-2}}{a_5}$$
we have
$$a_5'=1, \quad a_4'=0, \quad a_2'=0, \quad a_1'=\frac{(a_2a_4-a_1a_5)P_1}{(a_2a_4-a_5)P_1-a_5^2P_n}$$
and the determinant is formed by the potencies of the following non-zero factors: $P_1Q_{n-2}a_5((a_2a_4-a_5)P_1-a_5^2P_n)$.

\begin{itemize}
\item[$a)$] If $a_2a_4-a_1a_5\neq0$, choosing $P_n=\frac{P_1(a_1-1)}{a_5}$ we receive $a_1'=1.$ It follows the algebra $Z_1(1,0,0,0,1,0).$\\

\item[$b)$] If $a_2a_4-a_1a_5=0$, then we obtain $a_1'=0$ and we have the algebra $Z_2(0,0,0,0,1,0).$
\end{itemize}

\

\noindent \fbox{\textbf{Case 1.2.}} Let $a_5=0$ be. Then, $a_5'=0$ and we have
$$\begin{array}{l}
a_1'=\displaystyle\frac{a_1P_1Q_{n-2}+a_2P_{n-2}Q_{n-2}+a_4P_1Q_n}{Q_{n-2}(P_1+a_2P_{n-2})},\\[4mm]
a_2'=\displaystyle\frac{a_2Q_{n-2}}{P_1+a_2P_{n-2}},\\[4mm]
a_4'=\displaystyle\frac{a_4P_1R_n}{Q_{n-2}(P_1+a_2P_{n-2})}.
\end{array}$$
with $P_1Q_{n-2}R_n(P_1+a_2P_{n-2})\neq0.$

We observe that the nullities of $a_2$ and $a_4$ are invariant, so we can distinguish the following cases:

\

\noindent $a)$ Let $a_4\neq0$ be. Then, choosing
$$R_n=\frac{Q_{n-2}(P_1+a_2P_{n-2})}{a_4P_1}, \quad Q_n=-\frac{Q_{n-2}(a_1P_1+a_2P_{n-2})}{a_4P_1},$$
we get $a_4'=1 $ and $ a_1'=0.$

\begin{itemize}
\item[$a.1)$] If $a_2\neq0$, then choosing $Q_{n-2}=\displaystyle\frac{P_1+a_2P_{n-2}}{a_2},$ we obtain $a'_2=1$ and the algebra $Z_3(0,1,0,1,0,0).$\\

\item[$a.2)$] If $a_2=0$, then we have $a_2'=0$ and the algebra $Z_4(0,0,0,1,0,0).$
\end{itemize}

\

\noindent $b)$ Let $a_4=0$ be. Then $a_4'=0$ and we have
$$a_1'=\frac{a_1P_1+a_2P_{n-2}}{P_1+a_2P_{n-2}}, \quad a_2'=\frac{a_2Q_{n-2}}{P_1+a_2P_{n-2}}.$$

We have that the nullity of the following expression:
$$a_1'-1=\frac{P_1(a_1-1)}{P_1+a_2P_{n-2}}.$$
is invariant.

\

\begin{itemize}
\item[$b.1)$] Let $a_2\neq0$ be. Then, choosing $Q_{n-2}=\displaystyle\frac{P_1+a_2P_{n-2}}{a_2},$ we obtain $a_2'=1.$

\noindent $\bullet$ If $a_1-1\neq0,$ then putting $P_{n-2}=-\frac{a_1P_1}{a_2},$ we have $a_1'=0$ and the algebra $Z_5(0,1,0,0,0,0)$. The determinant of change of basis consists of the potencies of the following non-zero factors $a_2(a_1-1)P_1R_n.$

\

\noindent $\bullet$ If $a_1-1=0,$ then $a_1'=1$ and we obtain $Z_6(1,1,0,0,0,0).$

\

\item[$b.2)$] Let $a_2=0$ be. Then, we have $a_2'=0, \ a_1'=a_1=\lambda\in \mathbb{C}$ and the family $Z_7(\lambda,0,0,0,0,0),$ with $ \lambda\in \mathbb{C}.$
\end{itemize}

\


\noindent\textbf{Case 2.} Let $e_n\notin R(Z)$ be, then $(a_3,a_6)\neq(0,0).$ We can suppose that $a_3\neq0,$ in another case, $a_3=0$ and $a_6\neq 0$ we make the following change of basis $f_1'=f_1+f_3.$ Thus, $a_3\neq 0.$ Taking into account the expressions given in $(1)$, the restrictions $(2)$ and the following expression:
$$\begin{array}{ll}
\Delta&=a_3^3a_4+a_3^2a_4a_5-a_1a_3^2a_4a_5+a_2a_3a_4^2a_5-a_1a_3a_4a_5^2-\\[1mm]
&-a_1a_3^2a_6-3a_2a_3a_4a_6+a_1a_2a_3a_4a_6-a_2^2a_4^2a_6+\\[1mm]
&+a_3a_5a_6+a_1^2a_3a_5a_6+a_2a_4a_5a_6+a_1a_2a_4a_5a_6-\\[1mm]
&-a_1a_5^2a_6-a_2a_6^2+2a_1a_2a_6^2-a_1^2a_2a_6^2,
\end{array}$$
the nullity of the following expressions are invariant
$$\begin{array}{rl}
a_3'^2-a_3'a_5'+a_5'^2-a_2'a_6'&=\displaystyle\frac{(a_3^2-a_3a_5+a_5^2-a_2a_6)(Q_{n-2}R_n-Q_nR_{n-2})^2}{P_1Q_{n-2}+a_2P_{n-2}Q_{n-2}+a_5P_nQ_{n-2}+a_3P_{n-2}Q_n+a_6P_nQ_n},\\[4mm]
a_3'a_5'-a_2'a_6'&=\displaystyle\frac{(a_3a_5-a_2a_6)(Q_{n-2}R_n-Q_nR_{n-2})^2}{P_1Q_{n-2}+a_2P_{n-2}Q_{n-2}+a_5P_nQ_{n-2}+a_3P_{n-2}Q_n+a_6P_nQ_n},\\[4mm]
\Delta'&=\displaystyle\frac{\Delta P_1^2(Q_{n-2}R_n-Q_nR_{n-2})^4}{(a_2Q_{n-2}R_{n-2}+a_5Q_nR_{n-2}+a_3Q_{n-2}R_n+a_6Q_nR_n)^2},\\[4mm]
a_3'-a_5'&=\displaystyle\frac{(a_3-a_5)(Q_{n-2}R_n-Q_nR_{n-2})}{P_1Q_{n-2}+a_2P_{n-2}Q_{n-2}+a_5P_nQ_{n-2}+a_3P_{n-2}Q_n+a_6P_nQ_n}.
\end{array}$$

We can distinguish the following non isomorphic cases:

\

\noindent \fbox{\textbf{Case 2.1.}} Let $a_3a_5-a_2a_6\neq0$ be. Then, choosing
$$\begin{array}{rl}
P_{n-2}&=-(a_6P_1Q_nR_{n-2}+a_2a_5Q_{n-2}R_{n-2}^2+a_5^2Q_nR_{n-2}^2-a_6P_1Q_{n-2}R_n+\\[1mm]
&+a_3a_5Q_{n-2}R_{n-2}R_n+a_2a_6Q_{n-2}R_{n-2}R_n+2a_5a_6Q_nR_{n-2}R_n+a_3a_6Q_{n-2}R_n^2+\\[1mm]
&+a_6Q_nR_n^2)\displaystyle\frac{1}{(a_3a_5-a_2a_6)(Q_{n-2}R_n-Q_nR_{n-2})}\\[4mm]
P_n&=-(-a_3P_1Q_nR_{n-2}-a_2^2Q_{n-2}R_{n-2}-a_2a_5Q_nR_{n-2}^2+a_3P_1Q_{n-2}R_n-\\[1mm]
&-2a_2a_3Q_{n-2}R_{n-2}R_n-a_3a_5Q_nR_{n-2}R_n-a_2a_6Q_nR_{n-2}R_n-a_3^2Q_{n-2}R_n^2-\\[1mm]
&-a_3a_6Q_nR_n^2)\displaystyle\frac{1}{(a_3a_5-a_2a_6)(Q_{n-2}R_n-Q_nR_{n-2})}
\end{array}$$
and using the restriction $(2)$, we obtain $a_3'=1.$

\

\noindent $a)$ Let $a_3-a_5\neq0$ be. Then, choosing
$$Q_n=-\frac{a_2Q_{n-2}R_{n-2}+a_5Q_{n-2}R_n}{a_3R_{n-2}+a_6R_n}, \quad R_{n-2}=\frac{a_3Q_{n-2}-a_5Q_{n-2}-a_6R_n}{a_3}$$
we get $a_5'=0, \ a_6'=1$ and $a_2'=\lambda\in \mathbb{C}\setminus\{0\}.$
The determinant of the change of basis is formed by the potencies of the following non-zero factors: $$(a_3R_{n-2}+a_6R_n)P_1Q_{n-2}(a_2R_{n-2}^2+a_3R_{n-2}R_n+a_5R_{n-2}R_n+a_6R_n^2).$$

\begin{itemize}
\item[$a.1)$] Let $\Delta\neq0$ be. Then, we choose
$$\begin{array}{rl}
R_n&=-\displaystyle\frac{(a_2a_3a_4-a_3a_5-a_2a_4a_5+a_1a_5^2+a_2a_6-a_1a_2a_6)P_1}{(a_3-a_5)(a_3a_5-a_2a_6)},\\[4mm]
Q_{n-2}&=(a_3^2a_4-a_1a_3^2a_6-2a_2a_3a_4a_6+a_3a_5a_6+a_1a_3a_5a_6+\\[1mm]
&+a_2a_4a_5a_6-a_1a_5^2a_6-a_2a_6^2+a_1a_2a_6^2)\displaystyle\frac{P_1}{(a_3-a_5)^2(a_3a_5-a_2a_6)},
\end{array}$$
and we get $a_1'=a_4'=0$ and the family $Z_8(0,\lambda,1,0,0,1),\lambda\in \mathbb{C}\setminus\{0\}.$
The determinant of change of basis is formed by the non-zero potencies of the following factors $(a_3-a_5)(a_3a_5-a_2a_6)\Delta P_1.$

\

\item[$a.2)$] Let $\Delta=0$ be. Then, we have
$$\begin{array}{l}
\Delta'=a_5'=0, \quad a_3'=a_6'=1,\\[2mm]
a_4'-a_1'-3a_2'a_4'+a_2'a_1'a_4'-a_2'^2a_4'^2-a_2'(a_1'-1)^2=0.
\end{array}$$

Thus, we obtain the following family
$$\begin{array}{ll}
e_i\circ e_j=C_{i+j-1}^j e_{i+j}, &2\leq i+j\leq n-3,\\
e_1\circ e_{n-2}=e_{n-1}, &\\
e_{n-2}\circ e_1=\alpha e_{n-1},&\\
 e_{n-2}\circ e_n=e_{n-1},& \\
 e_{n-2}\circ e_{n-2}=\beta e_{n-1}, &\mbox{with }\beta\neq0\\
e_n\circ e_n=e_{n-1},& \\
e_n\circ e_1=\gamma e_{n-1},&\mbox{with }\gamma-\alpha-3\beta\gamma+\alpha\beta\gamma-\beta^2\gamma^2-\beta(1-\alpha)^2=0
\end{array}$$

Now, we make the generic change of basis
$$\begin{array}{rl}
e_1'&=P_1e_1+P_{n-2}e_{n-2}+P_ne_n,\\
e_{n-2}'&=Q_1e_1+Q_{n-2}e_{n-2}+Q_ne_n,\\
e_n'&=R_1e_1+R_{n-2}e_{n-2}+R_ne_n.
\end{array}$$
and we have the expressions of the new parameters and the new restrictions:
$$\begin{array}{l}
\alpha'=\displaystyle\frac{\alpha P_1Q_{n-2}+\beta P_{n-2}Q_{n-2}+\gamma P_1Q_n+P_nQ_{n-2}+P_{n-2}Q_n+P_nQ_n}{P_1Q_{n-2}+\beta P_{n-2}Q_{n-2}+P_{n-2}Q_n+P_nQ_{n-2}+P_nQ_n},\\[4mm]
\beta'=\displaystyle\frac{\beta Q_{n-2}^2+2Q_{n-2}Q_n+Q_n^2}{P_1Q_{n-2}+\beta P_{n-2}Q_{n-2}+P_{n-2}Q_n+P_nQ_{n-2}+P_nQ_n},\\[4mm]
\gamma'=\displaystyle\frac{\alpha P_1R_{n-2}+\beta P_{n-2}R_{n-2}+\gamma P_1R_n+P_nR_{n-2}+P_{n-2}R_n+P_nR_n}{P_1Q_{n-2}+\beta P_{n-2}Q_{n-2}+P_{n-2}Q_n+P_nQ_{n-2}+P_nQ_n},\\[4mm]
1=\displaystyle\frac{\beta Q_{n-2}R_{n-2}+Q_{n-2}R_n+Q_nR_{n-2}+Q_nR_n}{P_1Q_{n-2}+\beta P_{n-2}Q_{n-2}+P_{n-2}Q_n+P_nQ_{n-2}+P_nQ_n},\\[4mm]
1=\displaystyle\frac{\beta R_{n-2}^2+2R_{n-2}R_n+R_n^2}{P_1Q_{n-2}+\beta P_{n-2}Q_{n-2}+P_{n-2}Q_n+P_nQ_{n-2}+P_nQ_n},\\[4mm]
0= P_1R_{n-2}+\beta P_{n-2}R_{n-2}+P_{n-2}R_n+P_nR_{n-2}+P_nR_n=0. \ (**)
\end{array}$$

Putting
$$\begin{array}{rl}
P_{n-2}&=-\displaystyle\frac{P_1Q_nR_{n-2}-P_1Q_{n-2}R_n+\beta Q_{n-2}R_{n-2}R_n+Q_{n-2}R_n^2+Q_nR_n^2}{\beta(Q_nR_{n-2}-Q_{n-2}R_n)},\\[2mm]
P_n&=(P_1Q_nR_{n-2}+\beta^2Q_{n-2}R_{n-2}^2-P_1Q_{n-2}R_n+2\beta Q_{n-2}R_{n-2}R_n+\\[1mm]
&+\beta Q_nR_{n-2}R_n+Q_{n-2}R_n^2+Q_nR_n^2)\displaystyle\frac{1}{\beta(Q_nR_{n-2}-Q_{n-2}R_n)},\\[2mm]
Q_{n-2}&=R_{n-2}+R_n,\\[2mm]
Q_n&=-\d\frac{\beta Q_{n-2}R_{n-2}}{R_{n-2}+R_n}=-\beta R_{n-2},
\end{array}$$
we get
$$\begin{array}{rl}
\alpha'&=-(-P_1R_{n-2}+2\beta P_1R_{n-2}-\alpha\beta P_1R_{n-2}+\beta^2\gamma P_1R_{n-2}+\\[2mm]
&+\beta R_{n-2}^2-\beta^2R_{n-2}^2-P_1R_n+\beta P_1R_n-\alpha\beta P_1R_n+\\[2mm]
&+R_{n-2}R_n-\beta R_{n-2}R_n+R_n^2-\beta R_n^2)\d\frac{1}{\beta(\beta R_{n-2}^2+R_{n-2}R_n+R_n^2)},\\[4mm]
\gamma'&=(P_1R_{n-2}-\beta P_1R_{n-2}+\alpha\beta P_1R_{n-2}-\beta R_{n-2}^2+P_1R_n+\\[2mm]
&+\beta\gamma P_1R_n-R_{n-2}R_n-R_n^2)\d\frac{1}{\beta(\beta R_{n-2}^2+R_{n-2}R_n+R_n^2)},\\[2mm]
\beta'&=\beta\neq0
\end{array}$$
with $P_1(\beta R_{n-2}^2+R_{n-2}R_n+R_n^2)\neq0.$
It is easy to prove that:
$$\gamma'-\alpha'-3\beta'\gamma'+\alpha'\beta'\gamma'-\beta'^2\gamma'^2-\beta'(\alpha'-1)^2=
\frac{\gamma-\alpha-3\beta\gamma+\alpha\beta\gamma-\beta^2\gamma^2-\beta(\alpha-1)^2}{\beta R_{n-2}^2+R_{n-2}R_n+R_n^2}P_1^2=0.
$$

Now, if we choose
$$\begin{array}{ll}
R_n&=\d\frac{1}{2}(P_1+\beta\gamma P_1-R_{n-2})-\\[2mm]
&-\d\frac{\sqrt{(P_1+\beta\gamma P_1-R_{n-2})^2+4(P_1R_{n-2}-\beta P_1R_{n-2}+\alpha\beta P_1R_{n-2}-\beta R_{n-2}^2)}}{2}
\end{array}$$
we get $\gamma'=0, \ \beta'=\beta\neq 0,\ \alpha'+\beta'(\alpha'-1)^2=0$ with $\alpha'\in \mathbb{C}\setminus \{0,1\}$, thus $\beta'=-\d\frac{\alpha'}{(\alpha'-1)^2}$ and we have the family $Z_9(\alpha,-\frac{\alpha'}{(\alpha'-1)^2},1,0,0,1),$ with $\alpha\in \mathbb{C}\setminus\{0,1\}.$
\end{itemize}

\


\noindent $b)$ Let $a_3-a_5=0$ be. Then, we have $a_3'=a_5'=1, \ a_3^2-a_2a_6\neq0$ and
$$\begin{array}{l}
a_1'=\d\frac{((a_1-1)Q_{n-2}+a_4Q_n)P_1}{a_2Q_{n-2}R_{n-2}+a_3Q_nR_{n-2}+a_3Q_{n-2}R_n+a_6Q_nR_n}+1,\\[3mm]
a_2'=\d\frac{a_2Q_{n-2}^2+2a_3Q_{n-2}Q_n+a_6Q_n^2}{a_2Q_{n-2}R_{n-2}+a_3Q_nR_{n-2}+a_3Q_{n-2}R_n+a_6Q_nR_n},\\[3mm]
a_4'=\d\frac{((a_1-1)R_{n-2}+a_4R_n)P_1}{a_2Q_{n-2}R_{n-2}+a_3Q_nR_{n-2}+a_3Q_{n-2}R_n+a_6Q_nR_n},\\[3mm]
a_6'=\d\frac{a_2R_{n-2}^2+2a_3R_{n-2}R_n+a_6R_n^2}{a_2Q_{n-2}R_{n-2}+a_3Q_nR_{n-2}+a_3Q_{n-2}R_n+a_6Q_nR_n},
\end{array}$$
with
$$\begin{array}{l}
P_1(Q_nR_{n-2}-Q_{n-2}R_n)(a_2Q_{n-2}R_{n-2}+a_3Q_nR_{n-2}+a_3Q_{n-2}R_n+a_6Q_nR_n)\neq 0,\\
P_1R_{n-2}+a_2P_{n-2}R_{n-2}+a_3P_{n-2}R_n+a_5P_nR_{n-2}+a_6P_nR_n=0.
\end{array}$$

We can suppose \fbox{$a'_2=0$},
\begin{itemize}
\item $a_2\neq0,$ if we choose $Q_{n-2}=\d\frac{-a_3\pm\sqrt{a_3^2-a_2a_6}}{a_2}Q_n,$ we get $a_2'=0,$
\item $a_2=0,$ if we choose $Q_{n-2}=-\d\frac{a_6Q_n}{2a_3}$ we have $a_2'=0.$
\end{itemize}

Analogously, we can suppose that \fbox{$a'_4=0$}, using $R_n$.

Now, we have the new family:
$$\begin{array}{ll}
e_i\circ e_j=C_{i+j-1}^j e_{i+j},& 2\leq i+j\leq n-3,\\
e_1\circ e_{n-2}=e_{n-1}, &\\
e_{n-2}\circ e_1=a_1' e_{n-1},&\\
e_{n-2}\circ e_n=e_{n-1}, &\\
 e_n\circ e_{n-2}=e_{n-1},& \\
e_n\circ e_n=a_6'e_{n-1}.&
\end{array}$$
and we make a generic change of basis. Choosing $P_n$ and $P_{n-2}$ (as in previous cases) we get $a_3''=a_5''=1.$
Putting $R_{n-2}=0, \ Q_{n-2}=-\d\frac{a_6Q_n}{2}$  we have $a_4''=a_2''=0$ and
$$a_1''=\frac{(1-a_1')P_1+R_n}{R_n}, \qquad a''_6=\frac{2R_n}{Q_n}$$

It is easy to check that the nullity of $a''_1-1$ is invariant because $$a_1''-1=\frac{(a_1'-1)P_1Q_{n-2}}{(Q_{n-2}+a_6Q_n)R_n}$$
Moreover, choosing $ Q_n=2R_n$ we obtain $a'_6=1.$ The determinant of change of basis is formed by the non-zero potencies of the following factors
$a_6P_1Q_nR_n.$

Now, we can distinguish two cases:
\begin{itemize}
\item[$b.1)$] If $a_1'-1\neq0,$ choosing $R_n=(a_1'-1)P_1$ we get $a_1''=0$ and the algebra $Z_{10}(0,0,1,0,1,1).$

\

\item[$b.2)$] If $a_1'-1=0,$ then $a_1''=1$ and we have $Z_{11}(1,0,1,0,1,1).$
\end{itemize}


\

\noindent \fbox{\textbf{Case 2.2.}} Let $a_3a_5-a_2a_6=0$ be.

\

As $a_3\neq 0\Rightarrow a_5=\frac{a_2a_6}{a_3}.$ We substitute in $(1)$ and in $(2)$ and we choose
$$P_{n-2}=-\frac{a_3P_1R_{n-2}+a_2a_6P_nR_{n-2}+a_3a_6P_nR_n}{a_3(a_2R_{n-2}+a_3R_n)}.$$

Now, taking into account the new parameters $(1)$, we get to:
$$\begin{array}{l}
a_2'=\d\frac{(a_2Q_{n-2}+a_3Q_n)(a_3Q_{n-2}+a_6Q_n)(a_2R_{n-2}+a_3R_n)}{a_3^2P_1(Q_{n-2}R_n-Q_nR_{n-2})},\\[3mm]
a_3'=\d\frac{(a_3Q_{n-2}+a_6Q_n)(a_2R_{n-2}+a_3R_n)^2}{a_3^2P_1(Q_{n-2}R_n-Q_nR_{n-2})}\neq0.
\end{array}$$
with $a_3P_1(Q_nR_{n-2}-Q_{n-2}R_n)(a_2R_1+a_3R_n)\neq 0$ and that the nullity of the following expressions:
$$\begin{array}{l}
a_1'a_6'-a_3'a_4'=\d\frac{a_3(a_1a_6-a_3a_4)(a_2R_{n-2}+a_3R_n)(Q_{n-2}R_n-Q_nR_{n-2})P_1}{(a_3a_6P_nQ_n+a_3P_1Q_{n-2}+a_2a_3P_{n-2}Q_{n-2}+a_2a_6P_nQ_{n-2}+a_3^2P_{n-2}Q_n)^2},\\[3mm]
a_3'^2-a_2'a_6'=\d\frac{(a_3^2-a_2a_6)(a_3Q_{n-2}+a_6Q_n)(a_2R_{n-2}+a_3R_n)(Q_{n-2}R_n-Q_nR_{n-2})}{(a_3a_6P_nQ_n+a_3P_1Q_{n-2}+a_2a_3P_{n-2}Q_{n-2}+a_2a_6P_nQ_{n-2}+a_3^2P_{n-2}Q_n)^2},
\end{array}$$
are invariant. Thus, we can distinguish the non isomorphic cases:

\

\noindent $a)$ Let $a_3^2-a_2a_6\neq 0$ be. Then, choosing
$$\begin{array}{l}
Q_n=-\d\frac{a_2Q_{n-2}}{a_3},\\
\qquad \ \Rightarrow a'_2=a'_5=0\\[2mm]
P_n=\d\frac{(a_2a_3^3P_1R_{n-2}-a_1a_2a_3^3P_1R_{n-2}+a_2^2a_3^2a_4P_1R_{n-2}-a_2^2a_3a_6P_1R_{n-2}
-a_1a_3^4P_1R_n+a_2a_3^3a_4P_1R_n)}{(a_3^2-a_2a_6)^2(a_2R_{n-2}+a_3R_n)},\\
\qquad \ \Rightarrow a'_1=0,\\[2mm]
R_{n-2}=-\d\frac{a_6R_n}{a_3},\\
\qquad \ \Rightarrow a'_6=0\\[2mm]
P_1=\d\frac{(a_3^2-a_2a_6)^2R_n}{a_3^3},\\
\qquad \ \Rightarrow a'_3=1.
\end{array}$$
it is easy to check that the determinant of the change of basis
is formed by the potencies of the following non-zero factors $a_3(a_3^2-a_2a_6)P_1Q_{n-2}R_n.$
We compute the new parameter $a_4:$
$$a_4'=\frac{(a_3a_4-a_1a_6)R_n}{a_3Q_1}$$

\begin{itemize}
\item[$a.1)$] If $a_3a_4-a_1a_6\neq0,$ then choosing $Q_{n-2}=\d\frac{(a_3a_4-a_1a_6)R_n}{a_3},$ we have $a_4'=1$ and $Z_{12}(0,0,1,1,0,0).$

\item[$a.2)$] If $a_3a_4-a_1a_6=0,$ then  $a_4'=0$ and we have $Z_{13}(0,0,1,0,0,0).$
\end{itemize}

\

\noindent $b)$ Let $a_3^2-a_2a_6=0$ be. We have that $a_3\neq0,\ a_2\neq0$ and $a_6\neq0.$ It implies that $a_6=\frac{a_3^2}{a_2}$ and $a_5=\frac{a_2a_6}{a_3}=a_3.$ As in case $a)$, we choose $$P_n=-\frac{a_2(P_1R_{n-2}+a_2P_{n-2}R_{n-2}+a_3P_{n-2}R_n)}{a_3(a_2R_{n-2}+a_3R_n)}.$$

It is easy to check that the nullity of the expressions
$$a_3'-a_1'a_3'+a_2'a_4'=\frac{(a_3-a_1a_3+a_2a_4)(a_2Q_{n-2}+a_3Q_n)(a_2R_{n-2}+a_3R_n)^2}{a_2a_3^2P_1(Q_{n-2}R_n-Q_nR_{n-2})}$$
is invariant.

Choosing
$$\begin{array}{l}
P_1=-\d\frac{(a_2Q_{n-2}+a_3Q_n)(a_2R_{n-2}+a_3R_n)^2}{a_2a_3(Q_nR_{n-2}+Q_{n-2}R_n)}\Rightarrow a_3'=a_5'=1\\[2mm]
R_n=\d\frac{a_2Q_{n-2}+a_3Q_n-a_2R_{n-2}}{a_3}\Rightarrow a'_2=a'_6=1.
\end{array}$$
we get
$$\begin{array}{l}
a_1'=\d\frac{a_1Q_{n-2}+a_4Q_n-R_{n-2}}{Q_{n-2}-R_{n-2}},\\[2mm]
a_4'=\d\frac{a_2a_4Q_{n-2}+a_3a_4Q_n-a_3(1-a_1)R_{n-2}-a_2a_4R_{n-2}}{a_3(Q_{n-2}-R_{n-2})}.
\end{array}$$
with $a_2a_3(a_2Q_{n-2}+a_3Q_n)(Q_{n-2}-R_{n-2})\neq0.$

Thus, we can distinguish two non isomorphic cases:

\begin{itemize}
\item[$b.1)$] Let $a_3-a_1a_3+a_2a_4\neq0$ be.

\

If $a_4\neq0,$ then choosing $$Q_n=\frac{-a_2a_4Q_{n-2}+a_3R_{n-2}-a_1a_3R_{n-2}+a_2a_4R_{n-2}}{a_3a_4}$$
we have $a_4'=0$ and $a_1'=\frac{a_1a_3-a_2a_4}{a_3}\neq 1,$ that is, $a_1'=\lambda\in \mathbb{C}\setminus \{1\}.$
The determinant of the change of basis is formed by the potencies of the non-zero factors $a_2a_3a_4(a_3-a_1a_3+a_2a_4)R_{n-2}(Q_{n-2}-R_{n-2}).$
We obtain $Z_{14}=(\lambda,1,1,0,1,1), \lambda\in \mathbb{C}\setminus \{1\}.$

\

If $a_4=0,$ then choosing $R_{n-2}=0$ and $R_{n-2}\neq Q_{n-2},$ we have $a_4'=0.$
The determinant of the change of basis is formed by the potencies of the non-zero factors $a_2a_3Q_{n-2}(a_2Q_{n-2}+a_3R_{n}).$
We get to the previous family.

\item[$b.2)$] Let $a_3-a_1a_3+a_2a_4=0$ be.

\

We can suppose $a_4=\d\frac{(a_1-1)a_3}{a_2}$. It is easy to prove  that the nullity of
$$a_1'-1=\d\frac{(a_1-1)(a_2Q_{n-2}+a_3Q_n)(a_2R_{n-2}+a_3R_n)}{a_2a_3(Q_{n}R_{n-2}-Q_{n-2}R_n)}$$
is invariant.

Putting the adequate values of the parameters $P_n,P_1$ and $R_n,$ we get to $a_2'=a_3'=a_5'=a_6'=1$ and
$$a_1'=\frac{a_1a_2Q_{n-2}-a_3Q_n+a_1a_3Q_n-a_2R_{n-2}}{a_2a_3(Q_nR_{n-2}-Q_{n-2}R_n}$$
with $a_2a_3(a_2Q_{n-2}+a_3Q_n)(Q_{n-2}-R_{n-2})\neq 0.$


Now, we can distinguish two cases:

\

\begin{itemize}
\item If $a_1-1\neq0$, choosing $Q_n=-\d\frac{a_2(a_1Q_{n-2}-R_{n-2})}{(a_1-1)a_3},$ we have $a_1'=0,\ a_4'=-1$ and the algebra $Z_{15}(0,1,1,-1,1,1).$

\

\item If $a_1-1=0,$ we have $a_1'=a_4'=1$ and $Z_{16}(1,1,1,0,1,1).$
\end{itemize}
\end{itemize}
The theorem is proved.
\end{proof}


\

\noindent \textbf{Case II.} If $r_1\geq 3,\ r_2=1$,  then we have the following gradation:
$$ Z_1= \langle e_1,e_n \rangle,\ Z_2=\langle e_2\rangle, \dots, Z_{r_1}=\langle
e_{r_1},e_{n-2}\rangle,\ Z_{r_1+1}=\langle e_{r_1+1},e_{n-1}\rangle,\dots, Z_{n-3}=\langle e_{n-3}\rangle$$

Let $Z$ be a $n$-dimensional naturally graded Zinbiel algebra of type I with $r_1\geq 3$ and $r_2=1,$ then the following lemma is true.

\begin{lem}
If $r_1\geq 3,$ $ r_2=1$ and $n\geq 7,$ there are not any naturally graded Zinbiel algebras.
\end{lem}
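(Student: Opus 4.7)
The plan is to derive a contradiction directly from the assumed gradation by showing that the graded piece $Z_{r_1}$ cannot in fact accommodate $e_{n-2}$ as an independent element. The key input is that for $r_1\geq 3$ the intermediate pieces $Z_2,\dots,Z_{r_1-1}$ are all one-dimensional (spanned by $e_2,\dots,e_{r_1-1}$ respectively), so every product of $Z_1$-elements of length less than $r_1$ lands in a one-dimensional space, severely restricting where $e_{n-2}$ could come from.

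First I would fix the notation: $Z_1=\langle e_1,e_n\rangle$, and write $e_n\circ e_1=\alpha e_2$, which is the only ambiguity in $Z_2$ (note $e_1\circ e_n=0$ from the $J_1$-block and $e_n\circ e_n\in\langle e_2\rangle$). Then I would prove by induction on $k$, using the Zinbiel identity
\[
Z(e_n,e_1,e_{k-1})=(e_n\circ e_1)\circ e_{k-1}-e_n\circ(e_1\circ e_{k-1})-e_n\circ(e_{k-1}\circ e_1)=0,
\]
combined with the formula $e_i\circ e_j=C_{i+j-1}^{j}e_{i+j}$ from the preliminary lemma, that
\[
e_n\circ e_k=\alpha\,e_{k+1}\quad\text{for all}\quad 1\leq k\leq r_1-2.
\]
The inductive step is clean: the left-hand side yields $\alpha C_{k}^{k-1}e_{k+1}=\alpha k\,e_{k+1}$, while the right-hand side yields $(1+(k-1))\,e_n\circ e_k=k\,e_n\circ e_k$, forcing $e_n\circ e_k=\alpha e_{k+1}$.

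Next, I would push the same identity one step further to determine $e_n\circ e_{r_1-1}$. A priori this element lies in $Z_{r_1}=\langle e_{r_1},e_{n-2}\rangle$, so $e_n\circ e_{r_1-1}=c_1e_{r_1}+c_2e_{n-2}$. Applying $Z(e_n,e_1,e_{r_1-2})=0$ and using $e_2\circ e_{r_1-2}=(r_1-1)e_{r_1}$ together with $e_{r_1-2}\circ e_1=(r_1-2)e_{r_1-1}$, the identity collapses to
\[
\alpha(r_1-1)e_{r_1}=(r_1-1)\,e_n\circ e_{r_1-1},
\]
so that $e_n\circ e_{r_1-1}=\alpha e_{r_1}$ and in particular $c_2=0$. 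This is the crucial vanishing.

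Finally, I would invoke natural gradation: $Z_{r_1}=Z_1\circ Z_{r_1-1}$. Since $Z_{r_1-1}=\langle e_{r_1-1}\rangle$ and $Z_1=\langle e_1,e_n\rangle$, we obtain
\[
Z_{r_1}=\langle e_1\circ e_{r_1-1},\ e_n\circ e_{r_1-1}\rangle=\langle e_{r_1},\ \alpha e_{r_1}\rangle,
\]
which is at most one-dimensional. This contradicts $\dim Z_{r_1}=2$, so no such algebra exists. The only subtlety—and the step I expect to require care—is justifying that the hypothesis $r_1\geq 3$ (which is precisely what makes $e_{r_1-2}$ a legal element inside the filiform part, and what makes all the intermediate $Z_k$ one-dimensional) cannot be circumvented; the condition $n\geq 7$ is just what is needed to make the gradation non-vacuous (so that $r_1$ with $3\leq r_1\leq n-4$ can actually occur).
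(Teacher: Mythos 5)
Your proof is correct and follows essentially the same route as the paper: use the Zinbiel identity together with the formula $e_i\circ e_j=C_{i+j-1}^{j}e_{i+j}$ to show that $e_n\circ e_{r_1-1}$ has no $e_{n-2}$-component, so that $Z_{r_1}=Z_1\circ Z_{r_1-1}$ is one-dimensional, contradicting $e_{n-2}\in Z_{r_1}$. The only (immaterial) difference is that the paper first derives $\alpha=0$ from $Z(e_1,e_n,e_1)=0$ and concludes $e_n\circ e_i=e_i\circ e_n=0$ for all $i$, whereas you carry the coefficient $\alpha$ along and only need the single product $e_n\circ e_{r_1-1}=\alpha e_{r_1}$.
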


\begin{proof}
According to the properties of the gradation, $Z_1\circ Z_1=Z_2,$ we have:
$$\begin{array}{l}
e_n \circ e_1 =\alpha_1e_2,\\
e_n \circ e_n= \alpha_2e_2.
\end{array}$$

From $Z(e_1,e_n,e_1) =Z(e_1,e_n,e_n) =0$ we get $\alpha_1=\alpha_2=0.$

Furthermore, we have
$$e_n\circ e_i=e_n\circ (e_1\circ e_{i-1})=(e_n\circ e_1)\circ e_{i-1}-(i-1)e_n\circ e_i
\Rightarrow e_n\circ e_i=0\ \mbox{ for }\ 1\leq i\leq n-3,$$
and
$$e_i \circ e_n= (e_1 \circ e_{i-1}) \circ e_n = e_1 \circ (e_{i-1} \circ
e_n)+e_1 \circ (e_n \circ e_{i-1}) = 0 \Rightarrow e_i \circ e_n =
0\ \mbox{ for }\ 1\leq i \leq n-3.$$

We observe that it is not possible to obtain the element $e_{n-2}$ for $n\geq 7$, this contradicts with the supposition $r_1\geq 3$. Thus, in this case, we do not obtain any naturally graded Zinbiel algebra.
\end{proof}

\


\noindent \textbf{Case III.} If $r_1=2,\ r_2=1$, then
$$ Z_1= \langle e_1,e_n
\rangle,\ Z_2=\langle e_2,e_{n-2} \rangle,\ Z_{3}=\langle
e_3,e_{n-1}\rangle,\ Z_4=\langle e_4\rangle,\dots, Z_{n-3}=\langle e_{n-3}\rangle $$

Let $Z$ be a $n$-dimensional naturally graded Zinbiel algebra of type I with $r_1\geq 2$ and $r_2=1,$ then the following lemma is true.

\begin{lem}
If $r_1=2,$ $r_2=1$ and $n\geq 8,$ then there are not any naturally graded Zinbiel algebra.
\end{lem}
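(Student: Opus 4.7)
The plan is to exploit the fact that in Case III we must have $Z_2 = Z_1 \circ Z_1 = \langle e_2, e_{n-2}\rangle$, a two-dimensional space, and then to show that the Zinbiel identity forces $Z_1 \circ Z_1$ to sit inside $\langle e_2\rangle$, giving the required contradiction.

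First I would parametrize the products from $Z_1 \times Z_1$ into $Z_2$ allowed by the gradation. Since $Z_1 = \langle e_1, e_n\rangle$ and the defining relations already give $e_1\circ e_1 = e_2$ and $e_1\circ e_n = 0$, the only genuinely new unknowns are
\[
e_n \circ e_1 = \alpha_1 e_2 + \alpha_2 e_{n-2}, \qquad e_n\circ e_n = \alpha_3 e_2 + \alpha_4 e_{n-2}.
\]
In order for $e_{n-2}$ to belong to $Z_1\circ Z_1$ at all, at least one of $\alpha_2$ or $\alpha_4$ must be nonzero.

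Next I would apply the two Zinbiel identities $Z(e_1,e_n,e_1)=0$ and $Z(e_1,e_n,e_n)=0$, exactly as in the previous case. Using $e_1\circ e_n=0$ together with $e_1\circ e_2 = e_3$ and $e_1\circ e_{n-2} = e_{n-1}$, both identities collapse to linear combinations of $e_3$ and $e_{n-1}$. Since $n\geq 8$ ensures that $e_3$ and $e_{n-1}$ are independent basis vectors, setting their coefficients to zero forces $\alpha_1=\alpha_2=\alpha_3=\alpha_4=0$.

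This yields $Z_1\circ Z_1 \subseteq \langle e_2\rangle$, which contradicts $\dim Z_2 = 2$, and proves the lemma. There is essentially no serious obstacle here: the argument is a direct parallel of Case II, and the only subtlety is to check that the two Zinbiel relations are rich enough to kill both the $e_2$-coefficient and the $e_{n-2}$-coefficient of each product, which they are because each relation produces one copy of $e_3$ and one copy of $e_{n-1}$. The hypothesis $n\geq 8$ itself is only needed to keep the gradation levels $Z_2,\ldots,Z_{n-3}$ pairwise distinct so that this coefficient extraction is unambiguous.
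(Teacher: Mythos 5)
Your proposal is correct and follows essentially the same route as the paper: parametrize $e_n\circ e_1$ and $e_n\circ e_n$ in $Z_2=\langle e_2,e_{n-2}\rangle$, kill all four coefficients via $Z(e_1,e_n,e_1)=Z(e_1,e_n,e_n)=0$, and conclude that $e_{n-2}$ cannot lie in $Z_1\circ Z_1=Z_2$. The paper additionally derives $e_n\circ e_i=e_i\circ e_n=0$ for all $i$ before drawing the contradiction, but that extra step is not needed once one invokes $Z_2=Z_1\circ Z_1$ as you do.
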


\begin{proof}
According to the properties of the gradation, $Z_1\circ Z_1=Z_2,$ we obtain the following multiplication:
$$\begin{array}{l}
e_n\circ e_1=\alpha_1e_2+\beta_1e_{n-2},\\
e_n\circ e_n =\alpha_2e_2+\beta_2e_{n-2}.
\end{array}$$

From the identity $Z(e_1,e_n,e_1)=Z(e_1,e_n,e_n)=0,$  $\alpha_1 =\beta_1=\alpha_2 =\beta_2 = 0$ follows.

As in the previous case, we prove that $e_n\circ e_i=e_i\circ e_n=0$ for $1\leq i\leq n-3$, and therefore it is not possible to obtain the the element $e_{n-2}$ for $n\geq8$ and it contradicts the gradation.
Thus, in this case, we do not obtain any naturally graded Zinbiel algebra.
\end{proof}

\


\noindent \textbf{Case IV.} If $r_1=1,\ r_2\geq 4$, then
$$ Z_1= \langle
e_1,e_{n-2} \rangle,\ Z_2=\langle e_2,e_{n-1}\rangle,\dots,
Z_{r_1}=\langle e_{r_1},e_n\rangle,\dots,Z_{n-3}=\langle e_{n-3}\rangle$$

Let $Z$ be a $n$-dimensional naturally graded Zinbiel algebra of type I with $r_1=1$ and $r_2\geq 4,$ then the following lemma is true.

\begin{lem}
If $r_1=1,$ $r_2\geq 4$ and $n\geq 8,$ there are not any naturally graded Zinbiel algebras.
\end{lem}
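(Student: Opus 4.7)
\emph{Plan.} The plan is to mirror the strategy of the two preceding lemmas: assume the algebra is naturally graded with $r_1 = 1$ and $r_2 \geq 4$, and derive a contradiction by showing that $e_n$ cannot lie in any graded component $Z_k$ with $k \geq 3$. Since $r_1 = 1$, the first graded piece is $Z_1 = \langle e_1, e_{n-2}\rangle$, spanned by the two generators. I would first write down the products in $Z_2 = Z_1\circ Z_1 \subseteq \langle e_2, e_{n-1}\rangle$:
\begin{align*}
e_1\circ e_1 &= e_2, & e_1\circ e_{n-2} &= e_{n-1},\\
e_{n-2}\circ e_1 &= \alpha\, e_2 + \beta\, e_{n-1}, & e_{n-2}\circ e_{n-2} &= \gamma\, e_2 + \delta\, e_{n-1},
\end{align*}
for some scalars $\alpha, \beta, \gamma, \delta$. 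Evaluating $Z(e_1,e_{n-2},e_1)=0$, $Z(e_1,e_1,e_{n-2})=0$ and $Z(e_1,e_{n-2},e_{n-2})=0$ also determines the products $e_{n-1}\circ e_1$, $e_2\circ e_{n-2}$ and $e_{n-1}\circ e_{n-2}$ as scalar multiples of $e_3$.

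The crucial step is a recursive computation. From $Z(e_{n-2}, e_1, e_{i-1}) = 0$ one obtains $e_{n-2}\circ e_i = \tfrac{1}{i}\,(e_{n-2}\circ e_1)\circ e_{i-1}$ for $i\geq 2$, and analogous recursions hold for $e_i\circ e_{n-2}$, $e_{n-1}\circ e_i$ and $e_i\circ e_{n-1}$. Combining these with the cited preliminary lemma, which gives $e_j\circ e_k = C_{j+k-1}^{k}\,e_{j+k}$ on the filiform indices, I would show inductively that all four families of products stay in the filiform line $\langle e_{i+1}\rangle$ and never acquire a nonzero $e_n$-component.

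Finally, by induction on $k\geq 3$ I would prove $Z_k\subseteq \langle e_k\rangle$. The base case $k=3$ follows from $Z_3 = Z_1\circ Z_2$ together with the previous step; for the inductive step, $Z_{k+1} = Z_1\circ Z_k \subseteq \langle e_1\circ e_k,\, e_{n-2}\circ e_k\rangle \subseteq \langle e_{k+1}\rangle$. Hence $e_n\notin Z_k$ for any $k\geq 3$, contradicting $e_n\in Z_{r_2}$ with $r_2\geq 4$, which shows that no such naturally graded algebra exists. The main obstacle is the bookkeeping of the recursive step: one must organise the Zinbiel identities so that the four simultaneous inductions on $e_{n-2}\circ e_i$, $e_i\circ e_{n-2}$, $e_{n-1}\circ e_i$ and $e_i\circ e_{n-1}$ go through uniformly, with none of the four ever producing a nonzero $e_n$-component.
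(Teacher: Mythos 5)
Your proposal is correct and follows essentially the same route as the paper: the paper's own proof of this lemma is literally the one-line remark that the argument is ``similar to Case II and Case III,'' i.e.\ precisely the recursion $i\,(x\circ e_i)=(x\circ e_1)\circ e_{i-1}$ obtained from the Zinbiel identity, applied to the extra generators to show all their products stay on the filiform line, so that $e_n\in Z_{r_2}$ with $r_2\geq 4$ can never be produced by $Z_1\circ Z_{r_2-1}$. Your write-up just makes explicit what the paper leaves implicit.
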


\begin{proof}
Similar to Case II and Case III.
%
%
%
\end{proof}

\

\noindent \textbf{Case V.} If $r_1=1,$ $ r_2=2$, then
$$ Z_1= \langle
e_1,e_{n-2} \rangle,\ Z_2=\langle e_2,e_{n-1},e_n\rangle,\dots,
Z_{3}=\langle e_{3}\rangle,\dots,Z_{n-3}=\langle e_{n-3}\rangle$$

Let $Z$ be a $n$-dimensional naturally graded Zinbiel algebra of type I with $r_1=1,$ $r_2=2$ and $n\geq 8.$
We have
$$\begin{array}{lll}
 e_{n-2}\circ e_1 =\alpha_1e_2+ \beta_1e_{n-1}+\gamma_1e_n,\\[1mm]
 e_{n-2}\circ e_{n-2}=\alpha_2e_2+ \beta_2e_{n-1}+\gamma_2e_n,\\[1mm]
 e_{n-2}\circ e_{n-1}=\delta_1e_3,\\[1mm]
 e_{n-2}\circ e_n=\delta_2e_3,\\[1mm]
 e_n\circ e_1=\delta_3e_3, \\[1mm]
 e_n\circ e_{n-2}=\delta_4e_3,\\[1mm]
 e_n\circ e_{n-1}=\delta_5e_3,\\[1mm]
 e_n\circ e_n=\delta_6e_3.
\end{array}$$

We compute the following identities of Zinbiel
$$\begin{array}{lll}
Z(e_1,e_{n-2},e_1) = Z(e_1,e_{n-1},e_1) = Z(e_1,e_1,e_{n-2})=Z(e_{n-2},e_1,e_1)=0\\
Z(e_{n-2},e_{n-1},e_1) =Z(e_{n-2},e_{n-2},e_1) =Z(e_1,e_{n-2},e_{n-2}) =Z(e_1,e_n,e_1)=0\\
Z(e_{n-2},e_n,e_1) = Z(e_1,e_n,e_{n-2}) =Z(e_1,e_{n-2},e_{n-1}) =Z(e_1,e_{n-2},e_n) =0\\
Z(e_n,e_{n-1},e_1) =Z(e_1,e_n,e_n) =0.
\end{array}$$
and we have $\alpha_1=\alpha_2=\delta_1=\delta_2=\delta_3=\delta_4=\delta_5=\delta_6=0,$
and $$e_2\circ e_{n-2}=e_{n-2}\circ e_2=e_{n-1}\circ e_{n-2}=e_{n-1}\circ
e_{n-1}=e_{n-1}\circ e_n=0.$$

Now we will consider the following multiplication:
$$\begin{array}{rll}
e_{n-1}\circ e_i&=(e_{n-1}\circ e_1)\circ
e_{i-1}-(i-1)e_{n-1}\circ e_i&\\[1mm]
 \Rightarrow &e_{n-1}\circ e_i=0,& 1\leq i\leq n-3,\\[1mm]

e_i \circ e_{n-1}& = (e_1 \circ e_{i-1}) \circ e_{n-1} =&\\[1mm]
&= e_1 \circ(e_{i-1} \circ e_{n-1})+e_1 \circ (e_{n-1} \circ e_{i-1}) = 0&\\[1mm]
\Rightarrow &e_i \circ e_{n-1} = 0,&1\leq i \leq n-3,\\[1mm]
%
e_{n-2}\circ e_i&=(e_{n-2}\circ e_1)\circ
e_{i-1}-(i-1)e_{n-2}\circ e_i&\\[1mm]
 \Rightarrow &e_{n-2}\circ e_i=0,& 2\leq i\leq n-3,\\[1mm]
e_i \circ e_{n-2} &= (e_1 \circ e_{i-1}) \circ e_{n-2} =&\\[1mm]
&= e_1 \circ
(e_{i-1} \circ e_{n-2})+e_1 \circ (e_{n-2} \circ e_{i-1}) = 0,&\\[1mm]
\Rightarrow& e_i \circ e_{n-2} = 0, &2\leq i \leq n-3,\\[1mm]

e_{n}\circ e_i&=(e_n\circ e_1)\circ e_{i-1}-(i-1)e_n\circ e_i&\\[1mm]
\Rightarrow & e_n\circ e_i=0,& 1\leq i\leq n-3,\\[1mm]

e_i \circ e_n& = (e_1 \circ e_{i-1}) \circ e_n =&\\[1mm]
&=e_1 \circ (e_{i-1} \circ e_n)+e_1 \circ (e_n \circ e_{i-1}) = 0&\\[1mm]
\Rightarrow&e_i \circ e_n = 0,&1\leq i \leq n-3,
\end{array}$$

Thus, we have received the following family:
$$Z(\beta_1,\beta_2,\gamma_1,\gamma_2):\left\{\begin{array}{ll}
e_i\circ e_j= C_{i+j-1}^j e_{i+j}, & 2\leq i+j\leq n-3,\\[1mm]
e_1\circ e_{n-2}=e_{n-1},&\\[1mm]
e_{n-2}\circ e_1=\beta_1e_{n-1}+\gamma_1e_n,&\\[1mm]
e_{n-2}\circ e_{n-2}=\beta_2e_{n-1}+\gamma_2e_n,& \mbox{ with }(\gamma_1,\gamma_2)\neq (0,0).
\end{array}\right.$$


\

\begin{thm} Let $Z$ be an $n$-dimensional naturally graded Zinbiel algebra of type I with $r_1=1,$ $r_2=2$ and $n\geq 8.$ Then, $Z$ is isomorphic to one of the following algebras, pairwise non isomorphic:
$$Z_{17}(\lambda,0,0,1),\ \lambda \in \mathbb{C}, \quad Z_{18}(1,0,1,1), \quad Z_{19}(0,1,1,0),\quad Z_{20}(0,0,1,0).$$
\end{thm}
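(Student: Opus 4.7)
The plan is to carry out a normalization procedure analogous to the proof of Theorem~\ref{teorema1}, adapted to the gradation $Z_1=\langle e_1,e_{n-2}\rangle$, $Z_2=\langle e_2,e_{n-1},e_n\rangle$ of Case~V. The most general change of generators preserving this natural gradation is
$$e_1'=P_1e_1+P_{n-2}e_{n-2},\quad e_{n-2}'=Q_1e_1+Q_{n-2}e_{n-2},\quad e_n'=R_2e_2+R_{n-1}e_{n-1}+R_ne_n,$$
subject to $P_1Q_{n-2}-P_{n-2}Q_1\neq 0$ and $R_n\neq 0$. The remaining basis vectors $e_2',\dots,e_{n-3}',e_{n-1}'$ are then uniquely determined from $e_1'$ and $e_{n-2}'$ through the multiplication table of the family $Z(\beta_1,\beta_2,\gamma_1,\gamma_2)$ together with the chain rule provided by the lemma of~\cite{p-FNGZA}.

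Substituting these expressions into $e_{n-2}'\circ e_1'$ and $e_{n-2}'\circ e_{n-2}'$ and expanding via the Zinbiel identity yields rational formulas for the new parameters $(\beta_1',\beta_2',\gamma_1',\gamma_2')$ in terms of the old ones and the seven entries $P_i,Q_i,R_i$. Inspection of these formulas should confirm that the hypothesis $(\gamma_1,\gamma_2)\neq(0,0)$ is preserved and should identify three discrete invariants separating the orbits: the nullity of $\gamma_1$; the nullity of $\gamma_2$ within the stratum $\gamma_1\neq 0$; and the nullity of $\beta_2$ within the sub-stratum $\gamma_1=1,\gamma_2=0$. These invariants partition the parameter space into four classes, corresponding one-to-one with $Z_{17},Z_{18},Z_{19},Z_{20}$.

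Case-by-case normalization then proceeds as follows. If $\gamma_1=0$ (forcing $\gamma_2\neq 0$), one uses $R_n$ to set $\gamma_2'=1$ and the remaining freedom in $R_2,R_{n-1},Q_1$ to kill $\beta_2'$, leaving $\beta_1'=\lambda$ as a free continuous parameter and producing $Z_{17}(\lambda,0,0,1)$. If $\gamma_1\neq 0$ and $\gamma_2=0$, scaling via $Q_{n-2}$ gives $\gamma_1'=1$ and the remaining freedom eliminates $\beta_1'$; the invariant nullity of $\beta_2'$ then yields either $Z_{20}(0,0,1,0)$ or, after an additional scaling, $Z_{19}(0,1,1,0)$. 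Finally, if $\gamma_1\gamma_2\neq 0$, joint scaling of $e_{n-2}'$ and $e_n'$ fixes $\gamma_1'=\gamma_2'=1$ and the residual transformations normalize $(\beta_1',\beta_2')$ to $(1,0)$, yielding $Z_{18}(1,0,1,1)$.

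The main obstacle will be the careful bookkeeping needed to guarantee that at each step the chosen normalizations remain compatible with the non-degeneracy conditions $P_1Q_{n-2}-P_{n-2}Q_1\neq 0$ and $R_n\neq 0$, together with the verification of pairwise non-isomorphism of the four algebras. The three invariants identified above separate $Z_{17},Z_{18},Z_{19},Z_{20}$ from one another, and within the one-parameter family $Z_{17}(\lambda,0,0,1)$ the parameter $\lambda$ remains invariant because on the stratum $\gamma_1=0,\gamma_2=1,\beta_2=0$ the transformation formula for $\beta_1'$ collapses to the identity times $\beta_1$.
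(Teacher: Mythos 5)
Your overall strategy coincides with the paper's (whose proof of this theorem is literally the single sentence ``as in Theorem~\ref{teorema1} we make the generic change of basis and we study all the cases''): write down the gradation-preserving change of generators, compute the new parameters, and normalize. However, the stratification you propose is incorrect, because your primary ``invariant'', the nullity of $\gamma_1$, is not an isomorphism invariant. First note that adaptedness forces $Q_1=0$ (from $e_1'\circ e_{n-1}'=P_1^2Q_1e_3=0$) and $R_2=0$ (from $e_1'\circ e_n'=P_1R_2e_3=0$); with these, writing $e_{n-1}'=Ae_{n-1}+Be_n$ with $A=(P_1+\beta_2P_{n-2})Q_{n-2}$, $B=\gamma_2P_{n-2}Q_{n-2}$ and $D=AR_n-BR_{n-1}\neq0$, a direct computation gives
$$\gamma_1'=\frac{P_1Q_{n-2}^2\left(\gamma_1P_1+(\gamma_1\beta_2-\gamma_2\beta_1+\gamma_2)P_{n-2}\right)}{D},\qquad \gamma_2'=\frac{\gamma_2P_1Q_{n-2}^3}{D}.$$
Hence it is the nullity of $\gamma_2$, not of $\gamma_1$, that is preserved, and $\gamma_1$ can be created or killed by varying $P_{n-2}$ whenever $\gamma_1\beta_2-\gamma_2\beta_1+\gamma_2\neq0$.

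Concretely, your third case fails. The algebra $Z(0,0,1,1)$ has $\gamma_1\gamma_2\neq0$, so your scheme assigns it to $Z_{18}(1,0,1,1)$; but taking $e_1'=e_1-e_{n-2}$, $e_{n-2}'=e_{n-2}$, $e_n'=e_n$ yields $e_{n-2}'\circ e_1'=(e_{n-1}\cdot 0+e_n)-(e_{n-1}\cdot 0+e_n)=0$ and $e_{n-2}'\circ e_{n-2}'=e_n'$, with $e_{n-1}'=e_{n-1}-e_n$ and $e_2'=e_2-e_{n-1}$ still adapted, so $Z(0,0,1,1)\cong Z_{17}(0,0,0,1)$ and the normalization of $(\beta_1',\beta_2')$ to $(1,0)$ you announce there is impossible. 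The correct division is: split first on $\gamma_2=0$ versus $\gamma_2\neq0$. In the stratum $\gamma_2=0$ (which forces $\gamma_1\neq0$) your treatment via the nullity of $\beta_2$ is sound and produces $Z_{19}$ and $Z_{20}$. In the stratum $\gamma_2\neq0$ the orbit of $Z_{18}$ is cut out by the invariant condition ``$\gamma_1\neq0$ and $\gamma_1\beta_2-\gamma_2\beta_1+\gamma_2=0$'' (under which $\gamma_1'$ can never be annihilated), while every other algebra can be brought to $\gamma_1=0$, $\gamma_2=1$, $\beta_2=0$ and gives some $Z_{17}(\lambda)$; your observation that $\lambda=\beta_1$ is then rigid is essentially correct, but only becomes a proof after the stratification is repaired.
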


\begin{proof} As in Theorem \ref{teorema1} we make the generic change of basis and we study all de cases.
\end{proof}

\


\noindent \textbf{Case VI.} If $r_1=1,\ r_2=3$, then
$$ Z_1= \langle
e_1,e_{n-2} \rangle,\ Z_2=\langle e_2,e_{n-1}\rangle,\dots,
Z_{3}=\langle e_{3},e_n\rangle,\dots,Z_{n-3}=\langle e_{n-3}\rangle$$

Let $Z$ be an $n$-dimensional naturally graded Zinbiel algebra of type I with $r_1=1$, $r_2=3$ and $n\geq 8.$

\begin{thm}
Let $Z$ be an $n$-dimensional naturally graded Zinbiel algebra of type I with $r_1=1,$ $r_2=3$ and $n\geq 8.$ Then, $Z$ is isomorphic to the following algebra:
$$Z_{21}:\left\{\begin{array}{ll}
e_i\circ e_j= C_{i+j-1}^j e_{i+j}, & 2\leq i+j\leq n-3,\\[1mm]
e_1\circ e_{n-2}=e_{n-1},&\\[1mm]
e_{n-2}\circ e_1=-e_{n-1},&\\[1mm]
e_{n-2}\circ e_{n-1}=e_n.&
\end{array}\right.$$

\end{thm}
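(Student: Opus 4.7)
The plan is to follow the same template as Theorem \ref{teorema1} and the intermediate case analyses. First I would write down the most general multiplication table compatible with the prescribed natural gradation $Z_1 = \langle e_1, e_{n-2}\rangle$, $Z_2 = \langle e_2, e_{n-1}\rangle$, $Z_3 = \langle e_3, e_n\rangle$, $Z_i = \langle e_i\rangle$ for $i\geq 4$, together with the Jordan-block structure of $L_{e_1}$ (which already fixes every product $e_1 \circ e_j$). The gradation forces $e_{n-2}\circ e_1, e_{n-2}\circ e_{n-2} \in \langle e_2, e_{n-1}\rangle$; the products $e_{n-2}\circ e_2, e_{n-2}\circ e_{n-1}, e_n\circ e_1, e_n\circ e_{n-2}, e_n\circ e_{n-1}, e_n\circ e_n \in \langle e_3, e_n\rangle$; and every product at higher total degree is a scalar multiple of the unique basis vector of the relevant $Z_i$. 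Because $e_n$ must actually appear in $Z_3 = Z_1 \circ Z_2$, at least one $e_n$-coefficient among those degree-$3$ products is non-zero.

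Second, I would impose the Zinbiel identities $Z(a,b,c)=0$ on triples of basis generators, proceeding exactly as in Case~V. The identities at $(e_1, e_{n-2}, e_1)$, $(e_{n-2}, e_1, e_1)$, $(e_1, e_1, e_{n-2})$, $(e_1, e_n, e_1)$, $(e_n, e_1, e_1)$, $(e_1, e_1, e_n)$ and their analogues kill the $e_2$-coefficient in $e_{n-2}\circ e_1$, both coefficients in $e_{n-2}\circ e_{n-2}$, and the $e_3$-coefficients in the degree-$3$ products. An induction on $k$ of the same type as in Theorem \ref{teorema1} then yields $e_{k}\circ e_{n-2} = e_{n-2}\circ e_k = e_{k}\circ e_{n-1} = e_{n-1}\circ e_k = e_{k}\circ e_n = e_n\circ e_k = 0$ for $2\leq k\leq n-3$, and mixed identities like those at $(e_{n-2},e_{n-1},e_1)$ and $(e_1,e_{n-2},e_{n-1})$ tie the remaining parameters together. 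After this reduction only a few scalar parameters survive, subject to one or two polynomial relations together with the non-vanishing condition coming from $e_n\in Z_3$.

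Third, I would perform a generic gradation-preserving change of generators
$$e_1' = P_1 e_1 + P_{n-2}e_{n-2},\qquad e_{n-2}' = Q_1 e_1 + Q_{n-2}e_{n-2},\qquad e_n' = R\, e_n + R'\, e_3,$$
with $P_1 Q_{n-2} - P_{n-2} Q_1 \neq 0$ and $R\neq 0$. Propagating this through the structure constants gives explicit rational transformation formulas for the surviving parameters. Using $P_{n-2}, Q_1$ to eliminate the $e_{n-1}$-coefficient in $e_{n-2}\circ e_{n-2}$ and to normalize the $e_{n-1}$-coefficient in $e_{n-2}\circ e_1$ to $-1$, and $R$ together with the residual freedom in $P_1, Q_{n-2}$ to normalize the $e_n$-coefficient in $e_{n-2}\circ e_{n-1}$ to $1$ while killing the remaining degree-$3$ coefficients, collapses every algebra in the family onto the single model $Z_{21}$.

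The main obstacle is not conceptual but algebraic: because $e_n\in Z_3$ rather than $Z_1$ or $Z_2$, the change-of-basis formulas are much more rigid than in Theorem \ref{teorema1} or in Case~V, so one must verify that the combination of surviving parameters, which is heavily constrained by the mixed Zinbiel identities, is always reachable from the specific structure constants of $Z_{21}$ and that no additional isomorphism classes split off. As in the rest of the paper, this is the sort of long but routine bookkeeping best handled with \emph{Mathematica} along the lines indicated in the introduction.
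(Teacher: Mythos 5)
Your proposal follows essentially the same route as the paper: write the most general multiplication table allowed by the gradation $Z_1=\langle e_1,e_{n-2}\rangle$, $Z_2=\langle e_2,e_{n-1}\rangle$, $Z_3=\langle e_3,e_n\rangle$, impose the Zinbiel identities on triples of generators, run the induction killing $e_k\circ e_{n-2}$, $e_{n-2}\circ e_k$, etc., and use the fact that $e_n$ must actually be produced in $Z_3=Z_1\circ Z_2$ to conclude that the $e_n$-coefficient of $e_{n-2}\circ e_{n-1}$ is non-zero. The only discrepancy is one of emphasis rather than substance: in the paper the mixed identities already force $\beta_2=0$ (from $Z(e_{n-2},e_1,e_{n-2})=0$) and $(\beta_1+1)\beta_3=0$, hence $\beta_1=-1$ once $\beta_3\neq0$, so the multi-parameter change-of-basis analysis you anticipate at the end collapses to a single rescaling of the $Z_3$-generator normalizing $e_{n-2}\circ e_{n-1}=e_n$ (plus the separate low-dimensional check of $e_n\circ e_n$ for $n=8,9$).
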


\begin{proof}
Using the gradation and its properties, we compute $Z_1\circ Z_1,$ $Z_1\circ Z_2$ and $Z_2\circ Z_1$ and we have:
$$\begin{array}{l}
e_{n-2}\circ e_1=\alpha_1e_2+\beta_1e_{n-1},\\
e_{n-2}\circ e_{n-2}=\alpha_2e_2+\beta_2e_{n-1},\\
e_{n-2}\circ e_{n-1}=\alpha_3e_3+\beta_3e_n,\\
e_{n-1}\circ e_{n-2}=2\alpha_2e_3,\\
e_{n-2}\circ e_2=\alpha_4e_3+\beta_4e_n,\\
e_{n-1}\circ e_1=\alpha_1e_3,\\
e_2\circ e_{n-2}=\alpha_1e_3.
\end{array}$$

From the following Zinbiel identities:
$$Z(e_1,e_{n-1},e_1)=Z(e_{n-2},e_1,e_1)=Z(e_{n-2},e_{n-1},e_1)=Z(e_{n-2},e_{n-2},e_1)=Z(e_{n-2},e_{n-2},e_2)=0$$
we obtain that
$$(3):\left\{\begin{array}{l}
\alpha_1=\alpha_4=\beta_4=0,\\
3\alpha_3+\beta_3 (e_n\circ e_1)=0,\\
2\alpha_2-(\beta_1+1)\alpha_3=0,\\
(\beta_1+1)\beta_3=0,\\
\beta_2(e_{n-1}\circ e_2)=0.
\end{array}\right.$$

From the restrictions, it is easy to see that $\alpha_2=0.$ Moreover, we compute $e_{n-1}\circ e_2=e_2\circ e_{n-1}=0.$
Furthermore, from the multiplication $Z_1\circ Z_3$ and $Z_3\circ Z_1,$ we have
$$\begin{array}{l}
e_n\circ e_1=\delta_1e_4,\\
e_{n-2}\circ e_n=\delta_3e_4,\\
e_n\circ e_{n-2}=\delta_4e_4,\\
e_n\circ e_n=\left\{\begin{array}{ll}
\delta_2e_6,&n\geq 9,\\
0,&n=8
\end{array}\right.
\end{array}$$

Now we will consider the following equalities:
$$Z(e_1,e_n,e_1)=Z(e_{n-2},e_n,e_1)=Z(e_1,e_n,e_{n-2})=Z(e_{n-2},e_{n-2},e_{n-2})=0$$
we get $\delta_1=\delta_3=\delta_4=0.$

If $n\geq 10,$ we make $Z(e_1,e_n,e_n)=0$ and we obtain $\delta_2=0.$
Thus, $e_n\in Center(Z).$

Taking $(3)$ into account we have $\alpha_3=0.$ Thus, $e_{n-2}\circ e_{n-1}=\beta_3 e_n.$ If $\beta_3=0$ we have a contradiction of the gradation, in particular with $r_2=3.$ Thus, $\beta_3\neq 0.$ We can suppose $e_{n-2}\circ e_{n-1}=e_n.$ By using $(3),$ we can see $\beta_1=-1.$

From $Z(e_{n-2},e_1,e_{n-2})=0$ we get $\beta_2=0.$ Now, similar to previous cases we prove that
$$e_{n-1}\circ e_i=e_i\circ e_{n-1}=e_{n-2}\circ e_i=e_i\circ e_{n-2}=0,\ \mbox{ for }\ 2\leq i\leq n-3.$$

Furthermore, we have:
$$\begin{array}{l}
e_{n-1}\circ e_{n-1}=\delta_5 e_4,\\
e_{n-1}\circ e_n=\delta_6 e_5,\\
e_{n}\circ e_{n-1}=\delta_7 e_5,
\end{array}$$
and from the equalities:
$Z(e_1,e_{n-2},e_{n-1})=Z(e_1,e_{n-2},e_{n})=Z(e_{n-2},e_{n-1},e_{n-1})=0,$
we get $\delta_5=\delta_6=\delta_7=0.$

It only has to be proved that when $n=9,$ then $\delta_2=0.$ Now, from $Z(e_{n-2},e_{n-1},e_n)=0$ we lead to $\delta_2=0.$

Finally, we obtain the algebra of the theorem.
\end{proof}

\subsection{Type II}
Now we will consider naturally graded Zinbiel algebra of the second type.

Let $Z$ be a $n$-dimensional naturally graded Zinbiel algebra of type II, then there exists a basis $\{e_1,e_2,\dots,e_n\}$
such that the operator of left multiplication $L_{e_1}$ has the following matrix form:
$$\left(\begin{array}{ccc}
J_2&0&0\\
0&J_{n-3}&0\\
0&0&J_1
\end{array}\right)$$

We have the products:
$$\begin{array}{ll}
e_1\circ e_1=e_2,&\\
e_1\circ e_2=0,&\\
e_1\circ e_i=e_{i+1},&3\leq i\leq n-2,\\
e_1\circ e_{n-1}=0,&\\
e_1\circ e_n=0.&
\end{array}$$

Thus, the subspaces of the natural gradation are:
$$<e_1,e_3>\subseteq Z_1,\ <e_2,e_4>\subseteq Z_2,\ <e_5>\subseteq Z_3,\ \dots,\ <e_{n-1}>\subseteq Z_{n-3} $$

Let us assume that $e_n\in Z_{r}$ with $1\leq r\leq n-3.$

\begin{thm}
There does not exist any naturally graded Zinbiel algebra of type II with dimension greater or equal to 9.
\end{thm}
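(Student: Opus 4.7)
My plan is to case-split on $r\in\{1,2,\ldots,n-3\}$, the grading piece containing $e_n$, and in each case derive a contradiction from the Zinbiel identity combined with the hypothesis $n\ge 9$, following the template of Cases~II--IV of Type~I.

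The preliminary step would be to pin down all products inside the ``standard'' subalgebra $\langle e_1,e_2,\ldots,e_{n-1}\rangle$. The chain $e_1\circ e_i=e_{i+1}$ for $3\le i\le n-2$, together with the usual Lemma-style computation adapted to the shifted chain starting at $e_3$, forces the products $e_i\circ e_j$ among $\{e_2,e_3,\ldots,e_{n-1}\}$ to be prescribed by binomial coefficients, so that the only free data left in the algebra concerns products involving $e_n$. In each subcase I would then write $e_n\circ e_1$, $e_n\circ e_3$, $e_n\circ e_n$, $e_3\circ e_n$, and the other relevant products as generic linear combinations of basis vectors compatible with the grading, and exploit the identities $Z(e_1,e_n,e_1)$, $Z(e_1,e_n,e_n)$, $Z(e_1,e_1,e_n)$, $Z(e_n,e_1,e_1)$, and their $e_3$-analogues, to force the coefficients to vanish. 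A two-step induction along the chain, modelled on Case~I of Type~I, would then extend this to $e_k\circ e_n=e_n\circ e_k=0$ for all $1\le k\le n-1$.

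Once the decoupling is established, the conclusion follows quickly. For $r\ge 2$, the naturality of the gradation requires $e_n\in Z_r=Z_1\circ Z_{r-1}$, yet the only generators beyond $e_1,e_3$ (whose products all lie in the standard subalgebra) would be in the two-sided annihilator of the whole algebra, making $e_n$ impossible to realise as a product of elements of $Z_1$, a contradiction. For $r=1$, the decoupling instead exhibits a direct-sum decomposition $Z=\langle e_1,\ldots,e_{n-1}\rangle\oplus\langle e_n\rangle$ of Zinbiel algebras, violating our standing restriction to non-split algebras.

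The main obstacle I anticipate is the subcase $r=2$, where $e_n$ could a priori be produced by $e_3\circ e_1$ or $e_3\circ e_3$, products that are not immediately killed by the simpler $e_1$-based identities. Treating it requires the more delicate Zinbiel identities with $e_3$ on the left such as $Z(e_1,e_3,e_1)$, $Z(e_3,e_3,e_1)$, $Z(e_3,e_1,e_3)$ and $Z(e_3,e_n,e_1)$, and it is here that the hypothesis $n\ge 9$ should play its role, guaranteeing enough distinct basis vectors $e_4,e_5,e_6,e_7,\ldots$ for coefficient comparison and preventing the accidental degeneracies that may permit solutions in lower dimension.
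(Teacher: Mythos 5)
There is a genuine gap, and it lies precisely in the step you dismiss as preliminary. Your plan is to first establish that the products inside $\langle e_1,\dots,e_{n-1}\rangle$ are ``prescribed by binomial coefficients'' and then hunt for a contradiction in the behaviour of $e_n$. But in type II the chain is broken: $e_1\circ e_1=e_2$ while $e_1\circ e_2=0$ and $e_1\circ e_3=e_4$, so the lemma from \cite{p-FNGZA} does not apply and the products are \emph{not} forced into binomial form. Instead one must introduce free parameters, $e_3\circ e_1=\alpha_1e_2+\beta_1e_4$ and $e_3\circ e_2=\beta_2e_5+(*)e_n$ (these are the only products allowed by the gradation $\langle e_1,e_3\rangle\subseteq Z_1$, $\langle e_2,e_4\rangle\subseteq Z_2$), and the whole content of the theorem is that these two parameters are overdetermined. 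Indeed, the Zinbiel identities $Z(e_1,e_2,e_k)=0$ for $k=3,4,5$ express $e_3\circ e_3$, $e_3\circ e_4$, $e_3\circ e_5$ as multiples of $e_6$, $e_7$, $e_8$ respectively; since the gradation places $e_3\circ e_3\in Z_2$, $e_3\circ e_4\in Z_3$, $e_3\circ e_5\in Z_4$ while $e_6\in Z_4$, $e_7\in Z_5$, $e_8\in Z_6$, all three multiples must vanish, yielding $1+\beta_1+\beta_2=0$, $3+2\beta_1+\beta_2=0$, $6+3\beta_1+\beta_2=0$ --- three inconsistent linear equations in two unknowns. This is the entire proof, and it is also where the hypothesis $n\geq 9$ enters: one needs $e_8$ to exist as a basis vector to obtain the third equation.

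Consequently your case analysis on $r$ (the graded piece containing $e_n$), the decoupling of $e_n$, and the split/non-split argument for $r=1$ are never reached: the algebra already fails to exist before $e_n$ is considered. Your anticipated ``main obstacle'' at $r=2$ is likewise a red herring. If you carried out your preliminary step honestly you would stumble onto the contradiction, but as written the proposal asserts that this step succeeds and locates the difficulty in the wrong place, so the argument as planned does not close.
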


\begin{proof}
Using the property of the gradation, $Z_i\circ Z_j\subseteq Z_{i+j},$ we have that $e_3\circ e_1=\alpha_1 e_2+\beta_1 e_4$ and $e_3\circ e_2=\beta_2e_5+(*)e_n,$ where $(*)$
indicates the coefficient of the vector $e_n$ when $r=3,$ or $(*)=0.$

Let us consider the following product:
$$\begin{array}{l}
e_4\circ e_1=(e_1\circ e_3)\circ e_1=(1+\beta_1) e_5,\\
e_2\circ e_3=(e_1\circ e_1)\circ e_3=(1+\beta_1) e_5,\\
e_2\circ e_4=(e_1\circ e_1)\circ e_4=(2+\beta_1) e_6,\\
e_4\circ e_2=(e_1\circ e_3)\circ e_2=(1+\beta_1+\beta_2) e_6,\\
e_5\circ e_1=(e_1\circ e_4)\circ e_1=(2+\beta_1) e_6,\\
e_2\circ e_5=(e_1\circ e_1)\circ e_5=(3+\beta_1) e_7,\\
e_5\circ e_2=(e_1\circ e_4)\circ e_2=(3+2\beta_1+\beta_2) e_7,\\
\end{array}
$$

From the following identities:
$$\begin{array}{ll}
0=Z(e_1,e_2,e_3)&\Rightarrow e_3\circ e_3=(1+\beta_1+\beta_2) e_6,\\
                &\Rightarrow e_3\circ e_3\in Z_2 \mbox{ and } e_6\in Z_4,\\
                &\Rightarrow 1+\beta_1+\beta_2=0\\[1mm]
0=Z(e_1,e_2,e_4)&\Rightarrow e_3\circ e_4=(3+2\beta_1+\beta_2) e_7,\\
                &\Rightarrow e_3\circ e_4\in Z_3 \mbox{ and } e_7\in Z_5,\\
                &\Rightarrow 3+2\beta_1+\beta_2=0\\[1mm]
0=Z(e_1,e_2,e_5)&\Rightarrow e_3\circ e_5=(6+3\beta_1+\beta_2) e_8,\\
                &\Rightarrow e_3\circ e_5\in Z_4 \mbox{ and } e_8\in Z_6,\\
                &\Rightarrow 6+3\beta_1+\beta_2=0.
\end{array}$$

We have the next system of equations:
$$\begin{array}{l}
1+\beta_1+\beta_2=0\\
3+2\beta_1+\beta_2=0\\
6+3\beta_1+\beta_2=0\\
\end{array}$$

It is trivial to see that this system of equations do not have solution. Then, there does not exist any naturally graded Zinbiel algebra of type II with dimension greater or equal to 9.
\end{proof}

\subsection{Type III}

Now we will consider naturally graded Zinbiel algebra of type III.

Let $Z$ be a $n$-dimensional naturally graded Zinbiel algebra of type III, then there exists a basis $\{e_1,e_2,\dots,e_n\}$
such that the operator of left multiplication $L_{e_1}$ has the following matrix form:
$$\left(\begin{array}{ccc}
J_1&0&0\\
0&J_{n-3}&0\\
0&0&J_2
\end{array}\right)$$

We have the products:
$$\begin{array}{ll}
e_1\circ e_1=0,&\\
e_1\circ e_i=e_{i+1},&2\leq i\leq n-3,\\
e_1\circ e_{n-1}=e_n,&\\
e_1\circ e_n=0.&
\end{array}$$

Then, the subspaces of the natural gradation are:
$$<e_1,e_2>\subseteq Z_1,\ <e_3>\subseteq Z_2,\ <e_4>\subseteq Z_3,\ \dots,\ <e_{n-2}>\subseteq Z_{n-3} $$

\begin{thm}
There does not exist any naturally graded Zinbiel algebra of type III with dimension greater or equal to 7.
\end{thm}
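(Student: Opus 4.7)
The plan is to derive a contradiction from a short sequence of Zinbiel identities involving only $e_1, e_2, e_3$, exploiting that in type III we have $e_1 \circ e_1 = 0$ while $L_{e_1}$ still propagates the long Jordan chain $e_2 \mapsto e_3 \mapsto \cdots \mapsto e_{n-2}$. Notably, no analysis of where $e_{n-1}$ sits in the natural gradation will be required; the contradiction comes purely from the prescribed form of $L_{e_1}$ together with the Zinbiel identity, so this parallels the spirit of the type II proof but is considerably shorter.

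First I apply $Z(e_1,e_1,e_2)=0$. Since $e_1\circ e_1=0$ kills the leftmost term, the identity collapses to $L_{e_1}(e_2\circ e_1) = -L_{e_1}^2(e_2) = -e_4$. I do not need to resolve the components of $e_2\circ e_1$ lying in $\ker L_{e_1}$ (namely along $e_1, e_{n-2}, e_n$); only its image under $L_{e_1}$ is needed downstream. Next I use $Z(e_1,e_2,e_1)=0$, which expands to $e_3\circ e_1 = L_{e_1}(e_2\circ e_1)+L_{e_1}(e_3) = -e_4+e_4 = 0$.

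The obstruction then pops out of $Z(e_1,e_1,e_3)=0$: this simplifies to $L_{e_1}(e_3\circ e_1) = -L_{e_1}^2(e_3) = -e_5$. But the previous step has forced $e_3\circ e_1=0$, so its image under $L_{e_1}$ is $0$, whereas $-e_5$ is a nonzero basis vector — contradiction.

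The only subtle point, and the place where the hypothesis is sharp, is the index arithmetic: invoking $e_1\circ e_3=e_4$ requires $3\leq n-3$ and invoking $e_1\circ e_4=e_5$ requires $4\leq n-3$, and both hold exactly when $n\geq 7$. In fact the argument never uses natural gradation at all, so it actually rules out \emph{any} Zinbiel algebra whose operator $L_{e_1}$ has the Jordan form $J_1 \oplus J_{n-3} \oplus J_2$ in dimension $n\geq 7$, not merely the naturally graded ones; the statement of the theorem is therefore a corollary of this sharper structural fact.
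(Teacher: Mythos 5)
Your proof is correct and takes essentially the same route as the paper: both establish $e_3\circ e_1=0$ (the paper in one step via the symmetrized identity $(a\circ b)\circ c=(a\circ c)\circ b$, you via the two identities $Z(e_1,e_1,e_2)=Z(e_1,e_2,e_1)=0$, which amounts to the same thing) and then extract the contradiction $0=(e_1\circ e_1)\circ e_3=e_1\circ e_4=e_5$. Your observation that the natural gradation is never actually used is equally true of the paper's argument, and your index bookkeeping correctly pins down why $n\geq 7$ is the right hypothesis.
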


\begin{proof}
Using the identity $(a\circ b)\circ c=(a\circ c)\circ b$ we have
$$e_3\circ e_1=(e_1\circ e_2)\circ e_1=(e_1\circ e_1)\circ e_2=0$$
that is, $e_3\circ e_1=0.$

From the following identity we have:
$$0=(e_1\circ e_1)\circ e_3=e_1\circ (e_1\circ e_3)+ e_1\circ (e_3\circ e_1)=e_1\circ e_4=e_5$$

We get a contradiction because $n\geq 6$. Thus, there does not exist any naturally graded Zinbiel algebra of type III.
\end{proof}

\end{document}